\theoremstyle{plain}
\newtheorem{thm}{Theorem}[section]
\newtheorem{prop}[thm]{Proposition}
\newtheorem{cor}[thm]{Corollary}
\newtheorem{lemma}[thm]{Lemma}
\newtheorem{ques}[thm]{Question}
\newcommand{\cal}{\mathcal}
\renewcommand{\mod}[1]{\left\lvert #1 \right\rvert}
\newcommand{\inp}[2]{\left\langle #1,#2 \right\rangle}
\newcommand{\norm}[1]{\left\| #1 \right\|}
\newcommand{\bb}[1]{\mathbb{#1}}
\begin{document}

\title{Representations of Logmodular Algebras}
\author[V. I. Paulsen]{Vern I. Paulsen}
\address{Department of Mathematics, University of Houston, Houston, TX, 77204}
\email{vern@math.uh.edu}

\author[M. Raghupathi]{Mrinal Raghupathi}
\address{Department of Mathematics, University of Houston, Houston, TX, 77204}
\email{mrinal@math.uh.edu}
\date{\today}
\urladdr{http://www.math.uh.edu/~mrinal}
\thanks{This research was supported in part by NSF grant
  DMS-0600191.} 
\subjclass[2000]{Primary 47L55; Secondary 47A67, 47A20}
\keywords{logmodular algebra, completely contractive, 2-summing, semispectral}

\begin{abstract}
  We study the question of whether or not contractive representations
  of logmodular algebras are completely contractive. We prove that a
  2-contractive representation of a logmodular algebra extends to a
  positive map on the enveloping C*-algebra, which we show generalizes
  a result of Foias and Suciu on uniform logmodular algebras. Our
  proof uses non-commutative operator space generalizations of
  classical results on 2-summing maps and semispectral measures. We
  establish some matrix factorization results for uniform logmodular
  algebras.
\end{abstract}

\maketitle

\section{Introduction}

Let $\cal B$ be a unital $C^\ast$-algebra and let $\cal A \subseteq
\cal B$ be a unital subalgebra. By a {\em representation} of $\cal A$
on a Hilbert space $\cal H$ we mean a unital, contractive
homomorphism, $\rho: \cal A \to B(\cal H)$, where $B(\cal H)$ denotes
the $C^\ast$-algebra of bounded operators on $\cal H.$ A
representation of $\cal A$ is said to have a \textit{$\cal
  B$-dilation} if there exists a Hilbert space $\cal K$, an isometry,
$V: \cal H \to \cal K$ and a unital $\ast$-homomorphism, $\pi: \cal B
\to B(\cal K),$ such that $\rho(a)= V^\ast\pi(a)V,$ for every $a \in
\cal A.$ By a famous result of Arveson~\cite{Ar}, $\rho$ has a $\cal
B$-dilation if and only if $\rho$ is a {\em completely contractive}
map. For this reason there is a great deal of interest in results that
imply that contractive representations are automatically completely
contractive.

In particular, the Sz.-Nagy dilation theorem implies that contractive
representations of the disk algebra are completely contractive and it
is proven in~\cite{MM},~\cite{PP} and~\cite{DPP} that contractive
representations of many finite-dimensional CSL algebras, including
algebras of block upper triangular matrices, are completely contractive.

However, it is still unknown if contractive representations of
$H^{\infty}(\bb D)$ or of the upper triangular operators on
$\ell^2(\bb N)$ are automatically completely contractive. It is known
that weak*-continuous contractive representations of both these
algebras are completely contractive. The case of $H^{\infty}(\bb D)$
follows readily from Sz.-Nagy's theorem and the case of nest algebras
is done in~\cite{PPW}.

Both these algebras are examples of {\em logmodular} algebras and it
is still unknown if, more generally, contractive representations of
logmodular algebras are always completely contractive.

If $\rho:\cal{A}\to B(\cal{H})$ is a representation, then we say that
$\rho$ is $R_n$-contractive if $\rho_{1,n}:M_{1,n}(\cal{A})\to
B(\cal{H}^{(n)},\cal{H})$, defined by $\rho_{1,n}((a_1,\ldots,a_n))=
(\rho(a_1),\ldots,\rho(a_n))$, is contractive. The notion of
$C_n$-contractive is defined by using columns instead of rows. 

Foias and Suciu~\cite{FS} proved, in modern terminology, that every
$R_2$- and $C_2$-contractive representation of a logmodular uniform
algebra is automatically completely contractive. Thus, by their result
a $R_2$- and $C_2$-contractive representation of $H^{\infty}(\bb D)$
is completely contractive. However, it is still not known if every
$R_2$- and $C_2$-contractive representation of a general logmodular
algebra is automatically completely contractive. In particular, it is
not known if every $R_2$- and $C_2$-contractive representation of the
upper triangular operators on $\ell^2(\bb N)$ is completely
contractive.

In these notes we try to extend the result of Foias and Suciu to
general logmodular algebras. We prove that if $\cal A$ is a logmodular
subalgebra of a $C^\ast$-algebra $\cal B,$ then every representation
of $\cal A$ on $\cal H$ that is both $R_2$- and
$C_2$-contractive extends to a positive map from $\cal B$ to $B(\cal
H)$ and that this positive map is unique.  The Foias-Suciu result then
follows as a corollary.

A result of Pietsch, Pelczy\'{n}ski and Lindenstrauss~\cite{Pi, LP},
on the existence of ``dominating'' measures for 2-summing maps plays a
central role in Foias-Suciu's proof. We give a new proof of this
result using operator space methods and obtain a non-commutative
analogue that we need for our generalization.

The second key element in the Foias-Suciu proof is Mlak's
theory~\cite{Ml} of semispectral measures and we obtain an analogue of
this theory for non-commutative $C^\ast$-algebras.

Recent work of Blecher and Labuschange~\cite{BL,BL2} has shown that
many classical function theoretic results on logmodular algebras
extend to the ``non-commutative'' logmodular algebras. Their results
consider only completely contractive representations and do not
address the above problems, but we are able to use many of their
ideas. For example, they prove that every completely contractive
representation of an arbitrary logmodular algebra extends uniquely to
a two-positive map on the $C^\ast$-algebra. In their case, the
existence of a completely positive extension is automatic by Arveson's
extension theorem, while the issue in their case is the uniqueness. In
our case, the difficulty is the existence of the extension at all,
while the techniques to prove uniqueness are essentially contained in
their paper~\cite{BL}.

One limitation of the theory of logmodular algebras is the lack of
many natural examples. We prove that the only logmodular CSL algebras
of matrices are the block upper triangular matrices. Thus, by a result
of~\cite{PP} every contractive representation of these types of
logmodular algebras is completely contractive. Possibly every
logmodular subalgebra of the algebra of matrices is of this form. This
result also indicates that possibly every logmodular completely
distributive CSL algebra is a nest algebra.

\section{Definitions and Examples}

Let $X$ be a compact Hausdorff space. Let $C(X)$ denote the continuous
complex valued functions on $X$ and let $C(X)_+$ denote the set of
positive elements in $C(X)$. If $S\subset C(X),$ then let
$\cal{S}^{-1}$ denote the set of elements in $\cal{S}$ that are
invertible in $\cal{S}$. A {\em logmodular algebra} on $X$ is a
subalgebra $\cal{A}$ of $C(X)$ such that the set $\{\log \mod{a}\,:\,
a\in \cal{A}^{-1}\}$ is dense in the real-valued functions on $C(X)$,
which we denote $C_\bb{R}(X)$. The following well-known fact is
immediate.

\begin{lemma}
  $\cal{A} \subseteq C(X)$ is logmodular if and only if the set
  $\{\mod{f}^2\,:\, f\in \cal{A}^{-1}\}$ is dense in the set of
  positive invertible functions on $X$.
\end{lemma}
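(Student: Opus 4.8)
The strategy is to realize the passage from $\log$-moduli to squared moduli as a composition of two elementary homeomorphisms of function spaces, so that density transfers in both directions for free. Recall that $g\in C(X)$ is positive and invertible exactly when $g(x)>0$ for every $x\in X$, and that each such $g$ has a well-defined logarithm $\log g\in C_\bb{R}(X)$.

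First I would record the standard fact that the exponential map $\exp\colon C_\bb{R}(X)\to\{g\in C(X):g(x)>0\ \forall x\}$, $u\mapsto e^u$, is a homeomorphism for the uniform norms, with inverse $g\mapsto\log g$. Continuity of $\exp$ is clear since $t\mapsto e^t$ is uniformly continuous on compact intervals and uniformly convergent nets in $C_\bb{R}(X)$ are eventually uniformly bounded. Continuity of $g\mapsto\log g$ on the strictly positive functions uses the compactness of $X$: a strictly positive $g\in C(X)$ is bounded below by some $\delta>0$, so functions close to $g$ in the uniform norm are bounded below by $\delta/2$, and $t\mapsto\log t$ is uniformly continuous on $[\delta/2,\|g\|+1]$. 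In particular $\exp$ carries dense subsets of $C_\bb{R}(X)$ to dense subsets of the strictly positive functions and conversely.

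Finally, for $f\in\cal A^{-1}$ we have $f^2\in\cal A^{-1}$ and $\mod{f}^2=\mod{f^2}$ pointwise, so $\log(\mod{f}^2)=2\log\mod{f}$ and hence
\[
\{\mod{f}^2:f\in\cal A^{-1}\}=\exp\bigl(\{2\log\mod{f}:f\in\cal A^{-1}\}\bigr).
\]
The set $\{2\log\mod{f}:f\in\cal A^{-1}\}$ is the image of $\{\log\mod{f}:f\in\cal A^{-1}\}$ under the scaling homeomorphism $u\mapsto 2u$ of $C_\bb{R}(X)$, so composing the two homeomorphisms shows that $\{\mod{f}^2:f\in\cal A^{-1}\}$ is dense in the set of positive invertible functions if and only if $\{\log\mod{f}:f\in\cal A^{-1}\}$ is dense in $C_\bb{R}(X)$, which is precisely the defining condition for $\cal A$ to be logmodular. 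There is no serious obstacle here; the only place needing a moment's care is the bicontinuity of $\exp$, i.e. that strictly positive functions on the compact space $X$ are bounded below, which keeps $\log$ uniformly continuous on uniform neighborhoods.
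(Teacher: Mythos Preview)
Your argument is correct. The paper gives no proof of this lemma, stating only that it is ``immediate''; your use of the homeomorphism $\exp$ from $C_\bb{R}(X)$ onto the strictly positive functions in $C(X)$, composed with the scaling $u\mapsto 2u$, is precisely the natural way to unpack that word. (The aside that $f^2\in\cal A^{-1}$ is not actually needed---the identity $\log(\mod{f}^2)=2\log\mod{f}$ is elementary and you go on to use the scaling homeomorphism rather than this observation---but it does no harm.)
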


This equivalent characterization of logmodularity motivates the
following definition. Let $\cal{B}$ be a $C^\ast$-algebra and let
$\cal{A}$ be a subalgebra. Following ~\cite{BL}, we say that $\cal{A}$
is {\em logmodular} in $\cal{B}$ if the set $\{a^\ast a\,:\, a\in
\cal{A}^{-1}\}$ is dense in $\cal{B}_+^{-1}$.

From the invertiblity of the $a\in \cal{A}^{-1}$, it follows that the
set $\{aa^\ast\,:\, a\in \cal{A}^{-1}\}$ is also dense in
$\cal{B}_+^{-1}$.  Thus, if $\cal{A}$ is logmodular, then so is the
subalgebra $\cal{A}^\ast$.

Some algebras possess a property called {\em stong logmodularity} or
{\em factorization}. This means that every positive invertible element
in $\cal{B}$ is of the form $a^\ast a$ for some $a\in \cal{A}^{-1}$.

Examples of logmodular algebras include the disk algebra $A(\bb{D})
\subseteq C(\bb T)$, a conseqeunce of the Fej\'{e}r-Riesz theorem on
the factorization of positive trigonometric polynomials. However, the
disk algebra is not a logmodular subalgebra of $C(\bb D^-).$ More
generally, $H^\infty(\bb{D})$ is (strongly) logmodular when viewed as
a subalgebra of $L^\infty(\bb T)$.

An algebra $\cal{A}\subseteq C(X)$ is called \textit{Dirichlet} if and
only if $\Re(\cal{A})$ is uniformly dense in $C_\bb{R}(X)$. This is
equivalent to $\cal{A}+\overline{\cal{A}}$ being uniformly dense in
$C(X)$. Every Dirichlet algebra $\cal{A}$ is logmodular on
$C(X)$. However, the algebra $H^\infty$ is not Dirichlet. We refer the
reader to \cite[Section~3]{KH1} for more about these examples.

If $\bb{A}:=\{z\in\bb{C}\,:\, 0<r \leq |z|\leq 1\}$ is a closed
annulus, then the algebra of functions that are continuous on
$\partial \bb{A}$ and analytic on the interior of $\bb{A}$ provides an
example of an algebra that is not logmodular. However, a deep result
of Agler shows that every representation of this algebra is completely
contractive~\cite{agler}.

We now give some examples of non-commutative logmodular algebras. The
Cholesky decomposition shows that the algebra of upper triangular
matrices $\cal{T}_n$ is logmodular in $M_n$ and more generally, nest
subalgebras for countable nests are logmodular subalgebras of $B(\cal
H)$ since they have factorization by ~\cite{Ar}.

We now prove that among a certain family of subalgebras of matrix
algebras, the upper triangular matrices are essentially the only
logmodular algebras.  We let $M_n$ denote the $n \times n$ matrices
and let $\cal D_n \subseteq M_n$ denote the subalgebra of diagonal
matrices. Every subalgebra $\cal A, \cal D_n \subseteq \cal A
\subseteq M_n$ is known to be the algebra of matrices left invariant
by a {\em commutative lattice of subspaces.} Moreover, every
subalgebra of $M_n$ that is left invariant by a commuting lattice of
subspaces can be seen to be unitarily equivalent to such an
algebra. Such algebras are called {\em CSL algebras.} For further
details on CSL algebras see ~\cite{Da}. While the results below do not
need this characterization, the characterization serves to put the
result in perspective.

We call $\cal A\subseteq M_n$ an algebra of \textit{block upper triangular
matrices} if there are positive integers $n_1,\ldots,n_k$ such that
$n=n_1+\ldots+n_k$, and $\cal{A}$ is the set of matrices of the form
$(A_{i,j})$, where $A_{i,j}\in M_{n_i,n_j}$ and $A_{i,j}=0$ whenever
$i>j$.

\begin{thm} Let $\cal D_n \subseteq \cal A \subseteq M_n.$ Then $\cal
  A$ is a logmodular subalgebra of $M_n$ if and only if after a
  permutation of the basis, $\cal A$ is an algebra of block upper
  triangular matrices.
\end{thm}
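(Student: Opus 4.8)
The plan is to prove the two implications separately; the forward one is routine, so I start there. Logmodularity is preserved under conjugation by a permutation matrix (such a conjugation carries $\{a^\ast a:a\in\cal A^{-1}\}$ onto the corresponding set for the conjugated algebra and fixes the cone of positive invertibles), so to prove ``block upper triangular $\Rightarrow$ logmodular'' one may assume $\cal A$ is itself block upper triangular. Then $\cal A$ contains $\cal T_n$, the algebra of all upper triangular matrices, since $p\le q$ forces the index $p$ into a diagonal block no later than that of $q$, so $E_{pq}\in\cal A$. The Cholesky factorization writes every positive invertible $P\in M_n$ as $P=R^\ast R$ with $R\in\cal T_n$ invertible, and $R^{-1}\in\cal T_n\subseteq\cal A$, so $R\in\cal A^{-1}$; thus $\{a^\ast a:a\in\cal A^{-1}\}$ is exactly the set of positive invertibles and $\cal A$ is (even strongly) logmodular.

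For the converse, I would use that $\cal D_n\subseteq\cal A$ forces $\cal A=\mathrm{span}\{E_{pq}:(p,q)\in R\}$ with $R=\{(p,q):E_{pq}\in\cal A\}$ a preorder (reflexive and transitive), and that, after a permutation of the basis, $\cal A$ is block upper triangular exactly when $R$ is \emph{total}: $(p,q)\in R$ or $(q,p)\in R$ for all $p,q$. So assume $R$ is not total and fix $i\ne j$ with $(i,j)\notin R$ and $(j,i)\notin R$. Put $D_q=\{p:(p,q)\in R\}$, so every $a\in\cal A$ has its $q$-th column supported on $D_q$, and put $P=D_i\cap D_j$. The combinatorial heart of the argument is the claim that $D_p\subseteq P$ for every $p\in P$: for such $p$ one has $(p,i)\in R$ but $(i,p)\notin R$ (else $(i,p),(p,j)\in R$ give $(i,j)\in R$), and likewise $(p,j)\in R$, $(j,p)\notin R$, so $p$ lies strictly below both $i$ and $j$ in $R$; hence anything below $p$ lies below both $i$ and $j$, i.e.\ in $P$.

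Now suppose $\cal A$ is logmodular and apply the defining density to $P_0=I_n+\frac12(E_{ij}+E_{ji})$, which is positive and invertible: choose $a_k\in\cal A^{-1}$ with $a_k^\ast a_k\to P_0$. Let $c^{(k)}_q$ denote the $q$-th column of $a_k$, so that $(a_k^\ast a_k)_{qq'}=\inp{c^{(k)}_q}{c^{(k)}_{q'}}$ and $\mathrm{supp}\,c^{(k)}_q\subseteq D_q$. By the claim the columns $c^{(k)}_p$ with $p\in P$ all lie in the coordinate subspace $\bb C^P$, whose dimension is $\mod P$, and their Gram matrix is the $P\times P$ block of $a_k^\ast a_k$, which converges to the identity since $i,j\notin P$; hence for large $k$ these $\mod P$ vectors form a basis of $\bb C^P$ with uniformly bounded inverse Gram operator. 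Because $(a_k^\ast a_k)_{pi}\to(P_0)_{pi}=0$ for each $p\in P$ and the $c^{(k)}_p$ span $\bb C^P$, this forces $c^{(k)}_i|_P\to 0$, and symmetrically $c^{(k)}_j|_P\to 0$. Finally $c^{(k)}_i$ is supported on $D_i$ and $c^{(k)}_j$ on $D_j$, and $D_i\cap D_j=P$, so $(a_k^\ast a_k)_{ij}=\inp{c^{(k)}_i|_P}{c^{(k)}_j|_P}\to 0$, contradicting $(a_k^\ast a_k)_{ij}\to(P_0)_{ij}=\frac12$. This contradiction shows $R$ is total, which completes the proof.

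The step I expect to be the main obstacle is the one just sketched: finding the column/Gram reformulation, identifying the set $P$ of common predecessors of $i$ and $j$, proving it is downward closed ($D_p\subseteq P$), and choosing $P_0$ so that the surviving off-diagonal constraint $(a_k^\ast a_k)_{ij}\to\frac12$ is incompatible with the forced vanishing of $c^{(k)}_i|_P$ and $c^{(k)}_j|_P$. Everything else — the algebra structure and the density hypothesis — enters only through this single contradiction.
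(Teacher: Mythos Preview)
Your proof is correct and follows a genuinely different route from the paper's.

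The paper first upgrades logmodularity to \emph{exact} factorization via compactness of the unit ball of $M_n$, then factors the rank-one all-ones matrix $J=A^\ast A$ with $A\in\cal A$; analyzing the rows of $A$ produces an index $k$ with $E_{k,j}\in\cal A$ for all $j$, and from there a full $M_m$ block is peeled off, leaving a smaller logmodular algebra $\cal A_1\subseteq M_p$. The block structure is then obtained by induction on the size.

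You instead encode $\cal A$ by the preorder $R=\{(p,q):E_{pq}\in\cal A\}$, observe that ``block upper triangular after a permutation'' is exactly totality of $R$, and derive a contradiction from a single incomparable pair $i,j$ by testing logmodularity against $P_0=I+\tfrac12(E_{ij}+E_{ji})$. The downward-closed set $P=D_i\cap D_j$ localizes the column supports, and the Gram-matrix argument forces $(a_k^\ast a_k)_{ij}\to 0$ against the required limit $\tfrac12$. This avoids both the compactness step and the induction: you use only the density in the definition of logmodularity, and the argument is a one-shot contradiction rather than a recursive decomposition. The order-theoretic formulation (logmodular $\Rightarrow$ $R$ total) also isolates cleanly what may be the right obstruction for the conjecture, mentioned after the corollary, that every logmodular completely distributive CSL algebra is a nest algebra. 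The paper's approach, on the other hand, is more constructive---it actually exhibits the blocks---and the compactness observation (logmodular $\Rightarrow$ strongly logmodular in $M_n$) is of independent interest.
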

\begin{proof} If $\cal A$ is an algebra of block upper triangular
  matrices, then it contains the algebra of upper triangular matrices
  and hence is logmodular.

  Conversely, assume that $\cal A$ is logmodular. Since $\cal D_n
  \subseteq \cal A,$ we easily see that $\cal A$ is the span of the
  matrix units, $\{ E_{i,j} \}$ that belong to $\cal A.$

  By a compactness argument, it follows that every positive, $P \in
  M_n,$ factors as $P=A^*A,$ with $A \in \cal A.$ Let $J$ denote the
  rank one positive matrix with all entries equal to 1 and write
  $J=A^*A$ with $A \in \cal A.$ If we let $R_1,...,R_n$ denote the
  rows of $A$, then $J= R_1^*R_1 + ... + R_n^\ast R_n$ and each
  $R_i^*R_i$ is a rank one positive. From this it follows that for
  each $i,$ either $R_i^*R_i=0$ or $R_i^*R_i$ is a positive multiple
  of $J$.

  Hence, we have a row $k$ such that $R_k^*R_k$ is a positive multiple
  of $J$ and so each entry of this row must be non-zero. Thus, $\{
  E_{k,j}: j=1,...,n \} \subseteq \cal A.$ Re-numbering, we have that
  $\{ E_{1,j}: j=1,...,n \} \subseteq \cal A.$ Now let $S= \{ i :
  E_{i,1} \in \cal A \}.$ For $i \in S$ and any $j$, we have that
  $E_{i,j} = E_{i,1}E_{1,j} \in \cal A.$ If $i \notin S, j \in S,$
  then $E_{i,j} \notin \cal A,$ or else, $E_{i,1} = E_{i,j}E_{j,1} \in
  \cal A.$

  Thus, re-numbering so that $S= \{1,...,m \},$ we have that $E_{i,j}
  \in \cal A,$ for all $1 \le i,j \le m,$ while $E_{i,j} \notin \cal
  A,$ whenever, $m < i$ and $1 \le j \le m,$ that is, $\cal A$
  contains the entire upper left hand block.  Relative to this
  decomposition, setting $p=n-m,$ we have that $M_n = \begin{bmatrix}
    M_m & M_{m,p} \\ M_{p,m} & M_p \end{bmatrix}$ and
  \[\cal A = \left\{ \begin{bmatrix} A & B\\ 0 & C \end{bmatrix} : 
    A \in M_m, B \in M_{m,p}, C \in \cal A_1 \right\},\] where $\cal A_1
  \subseteq M_p$ is a subalgebra that contains the diagonal
  subalgebra, $\cal D_p.$

Now if we let $P= \begin{bmatrix} I_m &0 \\0 & Q \end{bmatrix}$ where $Q \in M_p$ is positive, then factoring, $P= A^*A,$ with $A= \begin{bmatrix} X & Y\\ 0 & Z \end{bmatrix} \in \cal A,$ we find that $I_m= X^*X, 0 = X^*Y$ and $Q= Y^*Y+Z^*Z.$ But the first equation implies that $X$ must be a unitary, and hence, $Y=0,$ so that $Q= Z^*Z,$ with $Z \in \cal A_1.$

Thus, $\cal A_1$ is a logmodular subalgebra of $M_p$ and we are done by induction.

\end{proof}

\begin{cor} Let $\cal D_n \subseteq \cal A \subseteq M_n.$ If $\cal A$ is a logmodular subalgebra of $M_n,$ then every contractive representation of $\cal A$ is completely contractive.
\end{cor}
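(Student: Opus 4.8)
The plan is to reduce this to the already-known case of block upper triangular matrices by invoking the structure Theorem above. Since $\cal{A}$ is a logmodular subalgebra of $M_n$ containing $\cal{D}_n$, that Theorem provides a permutation matrix $U$ for which $U\cal{A}U^\ast$ is an algebra of block upper triangular matrices. I would first observe that conjugation by the unitary $U$ is a complete isometry of $M_n$ onto itself carrying $\cal{A}$ onto $U\cal{A}U^\ast$; hence, given a representation $\rho$ of $\cal{A}$, the map $b\mapsto\rho(U^\ast b U)$ is a representation of $U\cal{A}U^\ast$ whose $k$-th matrix amplification differs from $\rho_k$ only by conjugation with the unitary $I_k\otimes U$. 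Thus $\rho$ is contractive, respectively completely contractive, exactly when the conjugated representation is, and so it is enough to treat the case in which $\cal{A}$ is itself an algebra of block upper triangular matrices.

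For that case I would simply appeal to the literature: an algebra of block upper triangular matrices is a CSL algebra whose invariant subspace lattice is a nest, and it is proved in \cite{MM} and \cite{PP} that every contractive representation of such an algebra is completely contractive. Combining this with the reduction above yields the corollary. (One could alternatively cite the disk-algebra/nest-algebra dilation arguments mentioned in the introduction, but the cleanest route is the finite-dimensional CSL result.)

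The main point — indeed essentially the only content — is the structure Theorem just established; the reduction step presents no obstacle, since the property ``every contractive representation is completely contractive'' is manifestly preserved under passage to a unitarily equivalent algebra. Consequently I do not anticipate any difficulty here beyond what has already been carried out in proving the Theorem, and the corollary is little more than a bookkeeping remark once the Theorem is in hand.
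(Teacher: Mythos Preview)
Your proposal is correct and follows essentially the same route as the paper: invoke the structure Theorem to reduce to the block upper triangular case, then cite \cite{PP}. The paper's proof is a single sentence citing \cite{PP}, leaving implicit the (trivial) observation that the property is preserved under permutation of the basis, whereas you spell out the unitary-conjugation reduction explicitly; but the content is the same.
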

\begin{proof}  In ~\cite{PP} it is proven that every contractive representation of an algebra of block upper triangular matrices is completely contractive.
\end{proof}

We do not know if every logmodular subalgebra of $M_n$ is of this form, but we believe that this is the case.
This result makes it natural to conjecture that every logmodular completely distributive CSL algebra is a nest algebra.

In ~\cite[Proposition~4.3]{BL} it is shown that if $\cal A$ is
logmodular on $ \cal
B,$ then $C^*_e(\cal A)= \cal B,$ where $C^*_e(\cal A)$ denotes Hamana's
boundary algebra, thus $\cal B$ is already the smallest $C^\ast$-algebra that
any completely isometric representation of $\cal A$ can generate.




\section{Noncommutative Analogues of Some Classic Results}

In this section we prove analogues of a classic factorization result for 2-summing maps and obtain a noncommutative analogue of Mlak's~\cite{Ml} theory of semispectral measures.

We first turn our attention to $2$-summing maps and their relation to completely bounded maps. Given a Hilbert space $\cal H,$ we form two operator spaces, {\em row Hilbert space,} $\cal H_r = B(\cal H, \bb C)$ and {\em column Hilbert space,} $\cal H_c = B(\bb C, \cal H).$ For more details on the structure of these operator spaces, see~\cite{Pa}.

Let $X$ be a compact Hausdorff space and let $\cal{V}$ be a subspace of $C(X)$. A map $\psi:\cal V \to \cal{H}$ is called 2-summing if there exists a constant $c$ such that 
\[\sum_{j=1}^n \norm{\psi(f_j)}^2_{\cal H} \leq c^2 \norm{\sum_{j=1}^n \mod{f_j}^2}_{\infty}\]
for all $n\geq 1$ and $f_1,\ldots,f_n\in\cal{A}$. The least such constant $c$ is denoted $a_2(\psi)$. 

The following result follows from \cite[Proposition~5.11]{Pis}(see
also \cite[Theorem~5.7]{ER}) and the
fact that, $a_2(\psi) = \pi_{2,c}(\psi) = \pi_{2,r}(\psi),$ but we
provide a direct argument.

\begin{lemma}
Let $\cal V \subseteq C(X)$ be a subspace, let $\cal H$ be a Hilbert
space and let $\psi:\cal V \to \cal{H}$ be a linear map. Then the
following are equivalent:
\begin{enumerate}
\item $\psi : \cal V \to \cal H$ is 2-summing,
\item $\psi: \cal V \to \cal H_r$ is completely bounded,
\item $\psi: \cal V \to \cal H_c$ is completely bounded.
\end{enumerate}
Moreover, in this case the 2-summing norm, $a_2(\psi)$ and the two
completely bounded norms are equal.
\end{lemma}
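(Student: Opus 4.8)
The plan is to unwind the two completely bounded norms directly from the concrete realizations $\cal H_r = B(\cal H,\bb C)$ and $\cal H_c = B(\bb C,\cal H)$, and to match them, essentially entry by entry, against the $2$-summing inequality. Two facts would be recorded first. (a) $M_n(C(X))$ is isometrically $C(X,M_n)$, so for $F=[f_{ij}]\in M_n(\cal V)$ one has $\norm{F}_{M_n(\cal V)} = \sup_{x\in X}\norm{[f_{ij}(x)]}_{M_n}$. (b) Identifying $M_n(\cal H_c)$ with $B(\bb C^{n},\cal H^{(n)})$ and $M_n(\cal H_r)$ with $B(\cal H^{(n)},\bb C^{n})$ and computing $T^\ast T$, respectively $SS^\ast$, one gets for $[\eta_{ij}]\in M_n(\cal H)$ that
\[\norm{[\eta_{ij}]}^2_{M_n(\cal H_c)} = \norm{\bigl[\,\textstyle\sum_{k=1}^n\inp{\eta_{kj}}{\eta_{ki}}\,\bigr]_{i,j}}_{M_n},\qquad \norm{[\eta_{ij}]}^2_{M_n(\cal H_r)} = \norm{\bigl[\,\textstyle\sum_{k=1}^n\inp{\eta_{jk}}{\eta_{ik}}\,\bigr]_{i,j}}_{M_n},\]
both matrices on the right being positive semidefinite. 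With (a) and (b) in place the lemma becomes a comparison of positive quadratic forms.

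\emph{From $2$-summing to completely bounded.} Assuming $a_2(\psi)<\infty$, I would fix $F=[f_{ij}]\in M_n(\cal V)$ and a unit vector $\xi=(\xi_1,\dots,\xi_n)\in\bb C^{n}$ and pair the positive matrix in the column formula (taken with $\eta_{ij}=\psi(f_{ij})$) against $\xi$; by linearity of $\psi$ this equals $\sum_{k=1}^n\norm{\psi(g_k)}^2_{\cal H}$, where $g_k = \sum_{j}\xi_j f_{kj}\in\cal V$ is the $k$-th coordinate of $x\mapsto F(x)\xi$. Since $\sum_k\mod{g_k(x)}^2 = \norm{F(x)\xi}^2_{\bb C^n}\le\norm{F(x)}_{M_n}^2$, one has $\norm{\sum_k\mod{g_k}^2}_\infty\le\norm{F}_{M_n(\cal V)}^2$, so the $2$-summing inequality gives $\sum_k\norm{\psi(g_k)}^2\le a_2(\psi)^2\norm{F}_{M_n(\cal V)}^2$. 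Taking the supremum over unit $\xi$ then gives $\norm{\psi_{n}(F)}_{M_n(\cal H_c)}\le a_2(\psi)\norm{F}_{M_n(\cal V)}$, so $\psi:\cal V\to\cal H_c$ is completely bounded with norm at most $a_2(\psi)$. The implication (1)$\Rightarrow$(2) is the same computation with the row formula: it produces $\sum_k\norm{\psi(h_k)}^2$ for $h_k = \sum_j\xi_j f_{jk}$, which is controlled by $\sum_k\mod{h_k(x)}^2 = \norm{F(x)^{t}\xi}^2\le\norm{F(x)}_{M_n}^2$.

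\emph{From completely bounded to $2$-summing.} For the converses, given $f_1,\dots,f_n\in\cal V$ I would apply complete boundedness of $\psi:\cal V\to\cal H_c$ to the matrix $F\in M_n(\cal V)$ whose first column is $(f_1,\dots,f_n)^{t}$ and whose other entries vanish. Then $\norm{F}_{M_n(\cal V)}^2 = \norm{\sum_j\mod{f_j}^2}_\infty$, while the column formula collapses to $\norm{\psi_n(F)}_{M_n(\cal H_c)}^2 = \sum_j\norm{\psi(f_j)}^2$; hence $\sum_j\norm{\psi(f_j)}^2\le c^2\norm{\sum_j\mod{f_j}^2}_\infty$, where $c$ is the completely bounded norm of $\psi:\cal V\to\cal H_c$, so $\psi$ is $2$-summing with $a_2(\psi)\le c$. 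Testing instead on matrices supported on the first row gives (2)$\Rightarrow$(1). Assembling the four inequalities shows that (1), (2) and (3) are equivalent and that $a_2(\psi)$ equals both completely bounded norms.

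The one genuinely structural step is establishing the matrix-norm identities in (b) with the correct index conventions — in particular tracking the transpose that appears on the row side; everything after that is routine bookkeeping. (Alternatively, (2)$\Leftrightarrow$(3) is automatic, since $\cal V\subseteq C(X)$ is completely isometric to its opposite operator space and $(\cal H_r)^{\mathrm{op}}=\cal H_c$, but the computation above is self-contained.)
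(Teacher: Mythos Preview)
Your proof is correct and follows essentially the same route as the paper: both compute the $M_n(\cal H_r)$ and $M_n(\cal H_c)$ norms via the Gram matrices $(h_{i,j})(h_{i,j})^\ast$ and $(h_{i,j})^\ast(h_{i,j})$, reduce to sums $\sum_k\norm{\psi(g_k)}^2$ by pairing against unit vectors in $\bb C^n$, and then invoke the $2$-summing inequality together with $\sum_k\mod{g_k}^2\le\norm{(f_{i,j})}^2$. Your argument is marginally cleaner in one spot: because the Gram matrix is positive, you pair it against a single unit vector $\xi$, whereas the paper computes the matrix norm as a supremum over two independent unit vectors $\lambda,\mu$ and then needs an extra Cauchy--Schwarz step to separate the resulting bilinear expression; the converse direction (testing on single-row or single-column matrices) is identical in both proofs.
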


\begin{proof} We only prove the equivalence of (1) and (2) and the
  equality of the 2-summing norm with the completely bounded norm into
  $\cal H_r.$ The proof of the equivalence of (1) and (3) and the other
  equality is identical.

To this end,
let $(h_{i,j})\in M_n(\cal{H}_r)$ and note that 
\begin{align*}
\norm{(h_{i,j})}^2&=\norm{(h_{i,j})(h_{i,j})^\ast}=\norm{(\sum_{k=1}^n\inp{h_{i,k}}{h_{j,k}})}\\
&=\sup \left\{\mod{\sum_{i,j,k=1}^n \inp{\lambda_i h_{i,k}}{\mu_j h_{j,k}}}\,:\, \sum_{i=1}^n \mod{\lambda_i}^2=\sum_{j=1}^n \mod{\mu_j}^2=1\right\}
\end{align*} 
Now, 
\begin{multline}\label{mat1}
\norm{(\psi(f_{i,j}))}^2=\\\sup \left\{\mod{\sum_{i,j,k=1}^n \inp{\lambda_i \psi(f_{i,k})}{\mu_j \psi(f_{j,k})}}\,:\, \sum_{i=1}^n \mod{\lambda_i}^2=\sum_{j=1}^n \mod{\mu_j}^2=1\right\}
\end{multline}
Let $g_k=\sum_{i=1}^n \lambda_i f_{i,k}$ and $h_k=\sum_{j=1}^n \mu_jf_{j,k}$. We have that the quantity in \eqref{mat1} is smaller than 
\begin{align*}
\sum_{k=1}^n \mod{\inp{\psi(g_k)}{\psi(h_k)}}&\leq (\sum_{k=1}^n \norm{\psi(g_k)}^2)^{1/2}(\sum_{k=1}^n \norm{\psi(h_k)}^2)^{1/2}\\
& \leq  a_2(\psi)^2\norm{\sum_{i=1}^n \mod{g_k}^2}^{1/2}\norm{\sum_{j=1}^n \mod{h_k}^2}^{1/2}
\end{align*}

Now, 
$(g_1,\ldots, g_n)=(\lambda_1,\ldots,\lambda_n) (f_{i,j})$ implies 
 \begin{align*}
\sum_{k=1}^n \mod{g_k}^2 &= (g_1,\ldots,g_n)(g_1,\ldots,g_n)^\ast \\
&=(\lambda_1,\ldots,\lambda_n)(f_{i,j})(f_{i,j})^{\ast} (\lambda_1,\ldots,\lambda_n)^\ast\\
& \leq (\lambda_1,\ldots,\lambda_n) \norm{(f_{i,j})}^2 (\lambda_1,\ldots,\lambda_n)^\ast  \leq \norm{(f_{i,j})}^2.
\end{align*}
and similarly we can show $\sum_{k=1}^n \mod{h_k}^2\leq \norm{(f_{i,j})}^2$.

Combining the above inequalities, we get,
$$\norm{(\psi(f_{i,j}))}^2\leq a_2(\psi)^2 \norm{(f_{i,j})}^2.$$
Hence, $\norm{\psi}_{cb} \le a_2(\psi),$ and so every 2-summing map is
completely bounded into the row Hilbert space.

Conversely, if $\psi$ is completely bounded into the row Hilbert
space, then
we have,
$$\sum_{i=1}^n \norm{\psi(f_i)}^2=\norm{(\psi(f_1),\ldots,\psi(f_n))}^2\leq \norm{\psi}^2_{cb}.$$
and so $\norm{\psi}_{cb} \ge a_2(\psi)$. Thus, the map $\psi$ is 2-summing
and the norms agree.
\end{proof}

The result of~\cite{Pi, LP} on the existence of dominating measures is essential to the proof of
Foias-Suciu~\cite{FS}. We now present a non-commutative version of this
result from which the result of~\cite{Pi, LP} follows readily. Note
that if $v \in \cal H_r = B(\cal H, \bb C),$ then $v^*v \in B(\cal H)$
is a rank one positive operator. While if $v \in \cal H_c,$ then $vv^*
\in B(\cal H)$ is a rank one positive operator. The existence of the
state in the following result is related to \cite[Proposition~5.1]{Pis}.

\begin{thm}\label{dominatingstate} Let $\cal B$ be a unital $C^\ast$-algebra, let $\cal X
  \subseteq \cal B$ be an operator subspace and let $\cal H$ be a
  Hilbert space. If $\psi: \cal X \to \cal H_r= B(\cal H, \bb C)$ (respectively, $\cal H_c = B(\bb C, \cal H)$)  is completely
  contractive, then there exists a state, $s: \cal B \to \bb C$ and a
  unital completely positive map $\Phi: \cal B \to B(\cal H)$ such
  that for every $x \in \cal X,$ we have that
$\|\psi(x)\|^2 \le s(xx^*)$ and $\psi(x)^*\psi(x) \le \Phi(x^*x)$(respectively, $\|\psi(x)\|^2 \le s(x^*x)$ and $\psi(x)\psi(x)^* \le \Phi(xx^*)$).
\end{thm}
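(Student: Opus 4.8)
The plan is to produce a Stinespring-type factorization $\psi(x) = V^{*}\pi(x)W$ in which $\pi$ is a single unital $\ast$-representation of $\cal B$ and $V, W$ are isometries; the two inequalities will then fall out of $VV^{*} \le I$ and $WW^{*} \le I$. I would carry this out for the row case; the column case is identical after interchanging the two summands of $\bb C \oplus \cal H$ throughout.

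First, I would form the operator system
\[
\cal S = \left\{ \begin{bmatrix} \lambda 1 & x \\ y^{*} & \mu 1 \end{bmatrix} : \lambda, \mu \in \bb C,\ x, y \in \cal X \right\} \subseteq M_{2}(\cal B),
\]
and define $\Phi_{0} \colon \cal S \to B(\bb C \oplus \cal H)$, where $B(\bb C \oplus \cal H)$ is identified with $2 \times 2$ operator matrices, by
\[
\Phi_{0}\begin{bmatrix} \lambda 1 & x \\ y^{*} & \mu 1 \end{bmatrix} = \begin{bmatrix} \lambda & \psi(x) \\ \psi(y)^{*} & \mu I_{\cal H} \end{bmatrix}.
\]
Since $\psi$ is completely contractive into $\cal H_{r} = B(\cal H, \bb C)$, the off-diagonal characterization of the completely bounded norm (see \cite{Pa}) shows that $\Phi_{0}$ is unital and completely positive. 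By Arveson's extension theorem, $\Phi_{0}$ extends to a unital completely positive map $\widetilde\Phi \colon M_{2}(\cal B) \to B(\bb C \oplus \cal H)$, and Stinespring's dilation theorem then gives a unital $\ast$-representation $\Pi \colon M_{2}(\cal B) \to B(\cal L)$ together with an isometry $\cal W \colon \bb C \oplus \cal H \to \cal L$ (isometric because $\widetilde\Phi$ is unital) such that $\widetilde\Phi(T) = \cal W^{*}\Pi(T)\cal W$.

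Next, using that $M_{2}(\cal B) = M_{2} \otimes \cal B$ and that $M_{2}$ has a unique irreducible representation, I would decompose $\cal L = \cal L_{0} \oplus \cal L_{0}$ and obtain a unital $\ast$-representation $\pi \colon \cal B \to B(\cal L_{0})$ with $\Pi(e_{ij} \otimes b) = e_{ij} \otimes \pi(b)$ for the matrix units $e_{ij}$ of $M_{2}$. Writing $\cal W = (\cal W_{ij})$ with $\cal W_{11} \colon \bb C \to \cal L_{0}$ and $\cal W_{22} \colon \cal H \to \cal L_{0}$, evaluating $\widetilde\Phi$ on $e_{11} \otimes 1$ and comparing with $\Phi_{0}$ forces $\cal W_{12} = 0$ and $\cal W_{11}^{*}\cal W_{11} = 1$; the identity $\cal W^{*}\cal W = I$ then forces $\cal W_{21} = 0$ and $\cal W_{22}^{*}\cal W_{22} = I_{\cal H}$. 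Hence $\cal W = V \oplus W$ with $V := \cal W_{11}$ and $W := \cal W_{22}$ both isometries, and evaluating $\widetilde\Phi$ on $e_{12} \otimes x$ yields $\psi(x) = V^{*}\pi(x)W$.

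Finally, set $s(b) = V^{*}\pi(b)V$, which is a state on $\cal B$ since $V$ is an isometry and $\pi$ a unital $\ast$-homomorphism, and $\Phi(b) = W^{*}\pi(b)W$, which is a unital completely positive map. Using $VV^{*} \le I$ and $WW^{*} \le I$ one gets
\[
\psi(x)^{*}\psi(x) = W^{*}\pi(x)^{*}(VV^{*})\pi(x)W \le W^{*}\pi(x^{*}x)W = \Phi(x^{*}x),
\]
\[
\norm{\psi(x)}^{2} = \psi(x)\psi(x)^{*} = V^{*}\pi(x)(WW^{*})\pi(x)^{*}V \le V^{*}\pi(xx^{*})V = s(xx^{*}),
\]
the second line using that $\psi(x)\psi(x)^{*} \in B(\bb C)$ is the scalar $\norm{\psi(x)}^{2}$. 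The step requiring genuine care is the complete positivity of $\Phi_{0}$, i.e.\ that the off-diagonal technique goes through with the asymmetric target $B(\bb C \oplus \cal H)$ in place of $B(\cal H \oplus \cal H)$; everything after that is routine bookkeeping with Stinespring dilations.
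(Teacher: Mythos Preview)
Your proof is correct and follows essentially the same route as the paper: form the operator system $\cal S \subseteq M_2(\cal B)$, use the off-diagonal lemma \cite[Lemma~8.1]{Pa} to see that $\Phi_0$ is completely positive (this lemma applies to any completely contractive map between operator spaces, so your worry about the asymmetric target $B(\bb C \oplus \cal H)$ is unfounded), and extend by Arveson. The only difference is cosmetic: the paper quotes \cite[Theorem~8.3]{Pa} to read off the diagonal corners $s=\phi_1$ and $\Phi=\Phi_2$ of the extended map and then applies positivity of $\begin{bmatrix} xx^* & x\\ x^* & 1\end{bmatrix}$ and $\begin{bmatrix} 1 & x\\ x^* & x^*x\end{bmatrix}$, whereas you unwind that theorem by hand via Stinespring to obtain the explicit factorization $\psi(x)=V^*\pi(x)W$ and deduce the inequalities from $VV^*\le I$ and $WW^*\le I$---two packagings of the same computation.
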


\begin{proof}
Writing an arbitrary operator $T \in B(\bb C \oplus \cal H)$ as a $2 \times 2$ operator
matrix, we have that $T = \begin{bmatrix} \lambda & r \\ c &
  B \end{bmatrix}$ where $\lambda \in \bb C, r \in \cal H_r, c \in
\cal H_c$ and $B \in B(\cal H).$ That is we identify 
\[B(\bb C \oplus \cal H) = \begin{bmatrix} \bb C & \cal H_r \\ \cal H_c & B(\cal H)\end{bmatrix}.\]
Let 
\[\cal{S}_\cal{X}:=\left\{\begin{bmatrix}\lambda 1 & x \\ y^* & \mu 1 \end{bmatrix}\, :\,
  \lambda, \mu\in \bb{C}, x,y \in \cal{X} \right\} \subseteq M_2(\cal
B),\]
where $1$ denotes the unit of $\cal B.$
Since $\psi$ is completely contractive, the map $\Psi:
\cal{S}_{\cal{X}} \to B(\bb{C} \oplus \cal{H})$ given by
$\Psi\left(\begin{bmatrix}\lambda & x \\ y^* & \mu\end{bmatrix}\right) =
\begin{bmatrix}\lambda & \psi(x) \\ \psi(y)^* & \mu I_{\cal H}\end{bmatrix}$ is
completely positive by ~\cite[Lemma~8.1]{Pa}. Thus, by Arveson's
extension theorem, we may extend $\Psi$ to a completely positive map
on $M_2(\cal B),$
which we still denote by $\Psi.$ Thus,
$\Psi:M_2(\cal B)\to B(\bb{C}\oplus \cal{H})$. By
~\cite[Theorem~8.3]{Pa}, there exists two completely positive maps
$\phi_1:\cal B \to \bb{C}$ and $\Phi_2:\cal B \to B(\cal{H})$ and a
completely bounded map $u$ such that 
\[\Psi\left(\begin{bmatrix}b_{1,1} & b_{1,2} \\ b_{2,1} & b_{2,2}\end{bmatrix} \right)=\begin{bmatrix}\phi_1(b_{1,1}) &
  u(b_{1,2}) \\ u(b_{2,1}^*)^\ast & \Phi_2(b_{2,2})\end{bmatrix}.\] 

 For any $x \in \cal{X}$ the matrix $\begin{bmatrix}xx^* & x \\ x^* & 1\end{bmatrix}\geq 0$ 
which implies that $\begin{bmatrix}\phi_1(xx^*) & \psi(x) \\ \psi(x)^\ast & I\end{bmatrix}$ is positive which yields 
$$\norm{\psi(x)}^2\leq \phi_1(xx^*).$$ Since $\phi_1(1)=1,$ we have that $s= \phi_1$ is the desired state.

On the other hand, $\begin{bmatrix}1 & x \\x^* & x^*x\end{bmatrix} \ge 0,$ which implies that
$\begin{bmatrix}1 & \psi(x) \\ \psi(x)^* & \Phi_2(x^*x)\end{bmatrix}$ is positive, from which it follows that $\psi(x)^*\psi(x) \le \Phi_2(x^*x),$ and $\Phi = \Phi_2$ is the desired unital completely positive map.

For the case where $\psi:X \to \cal H_c$ is completely contractive, we define 
\[\cal S := \left\{ \begin{bmatrix}\lambda & y^* \\ x & \mu I\end{bmatrix}\, :\, \lambda, \mu \in \bb C, x,y \in \cal X \right\}\] 
and $\Psi: \cal S \to B(\bb C \oplus \cal H)$ via $\Psi \left(\begin{bmatrix} \lambda & y^* \\ x & \mu I\end{bmatrix} \right) = \begin{bmatrix}\lambda & \psi(y)^* \\ \psi(x) & \mu I\end{bmatrix}$, and argue as above.
\end{proof}

\begin{cor}[Lindenstrauss-Pelczy\'{n}ski,Pietsch] Let $\cal V
  \subseteq C(X)$ and let $\psi: \cal V \to \cal H$ with $\cal H$ a
  Hilbert space. If $\psi$ is $2$-summing, then there exists a
  positive measure $\mu$ on $X$ with $\mu(X)=1$ such that
  $\norm{\psi(f)}^2\leq  a_2(\psi)^2 \int_X \mod{f}^2 \,d\mu$.
\end{cor}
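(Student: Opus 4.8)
The plan is to deduce this immediately from Theorem~\ref{dominatingstate} together with the Lemma identifying $2$-summing maps with completely bounded maps into row Hilbert space, so almost all of the work is already done. First I would normalize: assuming $\psi \neq 0$, replace $\psi$ by $\psi/a_2(\psi)$, so that without loss of generality $a_2(\psi) = 1$. By the Lemma, $\psi$ is then completely contractive as a map $\cal V \to \cal H_r$, where $\cal V \subseteq C(X)$ is given its operator space structure as a subspace of the unital $C^\ast$-algebra $\cal B = C(X)$.

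Next I would apply Theorem~\ref{dominatingstate} with $\cal B = C(X)$ and $\cal X = \cal V$. This produces a state $s : C(X) \to \bb C$ such that $\|\psi(f)\|^2 \le s(f f^*) = s(\mod{f}^2)$ for every $f \in \cal V$ (here $f f^* = \mod{f}^2$ since $C(X)$ is commutative). The remaining step is to invoke the Riesz representation theorem: the state $s$ is a positive linear functional on $C(X)$ with $s(1)=1$, hence $s(g) = \int_X g \, d\mu$ for a unique regular positive Borel measure $\mu$ on $X$ with $\mu(X) = s(1) = 1$. Substituting $g = \mod{f}^2$ gives $\|\psi(f)\|^2 \le \int_X \mod{f}^2 \, d\mu$.

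Finally I would undo the normalization: for the original $\psi$ this reads $\|\psi(f)\|^2 \le a_2(\psi)^2 \int_X \mod{f}^2 \, d\mu$, which is the assertion; the case $\psi = 0$ is trivial (take any probability measure). I do not anticipate a genuine obstacle here — the content lies entirely in Theorem~\ref{dominatingstate} and the Lemma — the only points requiring care are the harmless normalization and the observation that the unital completely positive map $\Phi$ of Theorem~\ref{dominatingstate} is not needed for the scalar-valued conclusion, only the dominating state $s$ is.
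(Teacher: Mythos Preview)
Your proposal is correct and matches the paper's intended approach exactly: the paper states this as an immediate corollary with no written proof, having already remarked that the classical result ``follows readily'' from Theorem~\ref{dominatingstate}, and your argument---normalize, apply the Lemma to pass from $2$-summing to completely contractive into $\cal H_r$, invoke Theorem~\ref{dominatingstate} to obtain the dominating state, then Riesz representation---is precisely the intended deduction.
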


We now turn our attention to obtaining $C^\ast$-algebraic generalizations of Mlak's theory of semispectral measures~\cite{Ml}.
We shall need the following folklore result about quadratic forms
on Hilbert spaces.

\begin{prop}\label{quadratic} Let $\cal H$ be a complex Hilbert space
  and let $\phi: \cal H \to \bb C$ be a bounded quadratic form, that is:
\begin{enumerate}
\item $\phi(\lambda
  h) = |\lambda|^2 \phi(h),$ for every $\lambda \in \bb C, h \in \cal H,$
\item there exists a constant $C>0,$ with $|\phi(h)| \le C \|h\|^2,$
\item $\phi(h+k) + \phi(h-k) = 2 \phi(h) + 2\phi(k).$
\end{enumerate}
Then there exists a bounded operator, $T \in B(\cal H),$ such
that $\phi(h) = \langle Th,h \rangle,$ for every $h \in \cal H$ and
$\|T\| \le 2C$ and such a $T$ is unique.
\end{prop}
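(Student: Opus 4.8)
The plan is to recover $T$ from $\phi$ by polarization; the only real work is the bookkeeping needed to get $\bb C$-sesquilinearity and the stated norm bound.

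First I would put $g(h,k)=\frac14\bigl(\phi(h+k)-\phi(h-k)\bigr)$. Property~(1) with $\lambda=0$ and $\lambda=-1$ gives $\phi(0)=0$ and $\phi(-v)=\phi(v)$, so $g$ is symmetric, $g(h,h)=\phi(h)$, and $g(h,0)=0$. Applying the parallelogram law~(3) to the pairs $h_1\pm k$ and $h_2\pm k$ in the usual Jordan--von Neumann manner yields $g(h_1+h_2,2k)=2g(h_1,k)+2g(h_2,k)$; specializing $h_2=0$ and using symmetry then gives additivity of $g$ in each variable, and hence $g(rh,k)=r\,g(h,k)$ for every rational $r$.

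Next I would upgrade to real linearity. The crude estimate $\mod{g(h,k)}\le\frac{C}{2}(\norm{h}^2+\norm{k}^2)$ coming from~(2) is not by itself enough to give continuity, so I would first note that $g(th,t^{-1}k)=g(h,k)$ for rational $t>0$ (by rational homogeneity), apply the crude estimate to the right-hand side, and optimize over $t$ to get $\mod{g(h,k)}\le C\norm{h}\,\norm{k}$. Additivity then makes $g(\cdot,k)$ Lipschitz, hence continuous, and passing to the limit along rationals gives $g(\lambda h,k)=\lambda g(h,k)$ for all real $\lambda$; by symmetry the same holds in the second slot. Now set $B(h,k)=g(h,k)+i\,g(h,ik)$. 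Using property~(1) for $\lambda\in\{i,-i,-1\}$ (so that, e.g., $\phi(ih\pm k)=\phi(h\mp ik)$ and $\phi(iv)=\phi(v)$) one checks the identities $g(ih,k)=-g(h,ik)$ and $g(ih,ik)=g(h,k)$, from which $B(ih,k)=iB(h,k)$ and $B(h,ik)=\bar i\,B(h,k)$; combined with the real-bilinearity of $g$ this makes $B$ a sesquilinear form with $B(h,h)=\phi(h)$ (the correction term $g(h,ih)$ vanishes) and $\mod{B(h,k)}\le\mod{g(h,k)}+\mod{g(h,ik)}\le 2C\norm{h}\,\norm{k}$.

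Finally I would invoke the standard Riesz-type representation of bounded sesquilinear forms: for fixed $h$ the map $k\mapsto\cl{B(h,k)}$ is a bounded linear functional, so $B(h,k)=\inp{Th}{k}$ for a unique $T\in B(\cal H)$ with $\norm{T}=\norm{B}\le 2C$, and then $\phi(h)=B(h,h)=\inp{Th}{h}$. Uniqueness of $T$ is immediate since on a complex Hilbert space the scalars $\inp{Sh}{h}$ determine $S$. (One could alternatively split $\phi=\phi_1+i\phi_2$ into its real and imaginary parts and polarize each, using that a real-valued form bounded by $C\norm{h}^2$ is represented by a self-adjoint operator of norm $\le C$; this also gives the bound $2C$.) The only points needing genuine attention are the homogeneity step, where the $\norm{h}\,\norm{k}$ bound on $g$ must be obtained \emph{before} real linearity rather than after, and the fussy verification that the $i$-twisted polarization is truly $\bb C$-sesquilinear, which is exactly where hypothesis~(1), as opposed to merely the parallelogram law, gets used.
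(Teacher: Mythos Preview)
Your proof is correct and follows precisely the approach the paper sketches: it defines a two-variable function by polarization, mimics the Jordan--von~Neumann argument to obtain bounded sesquilinearity (with property~(1) handling the $\bb C$-homogeneity), and then appeals to the Riesz representation of bounded sesquilinear forms. The paper gives only this sketch and refers to \cite[Exercise~4.18]{Pa}; you have carefully supplied the details it omits, including the useful observation that the product bound $\mod{g(h,k)}\le C\norm{h}\,\norm{k}$ must be established \emph{before} passing from rational to real homogeneity.
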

\begin{proof} This result is ~\cite[Exercise~4.18]{Pa}, so we only
  sketch the proof. One defines a two variable function, $\psi: \cal H
  \times \cal H \to \bb C$ by polarization and
  then copies the steps of the simplified proof of the Jordan-von~Neumann theorem
  to verify that $\psi$ is a bounded sesquilinear map.
\end{proof}

Note that if we are given a unital $C^\ast$-algebra $\cal B,$ a Hilbert space $\cal H$ and a positive map $\Phi: \cal B \to B(\cal H),$ then for each vector $h \in \cal H,$ we obtain a positive functional $\gamma(h)$ on $\cal B$ by setting $\gamma(h)(b) = \langle \Phi(b)h,h \rangle.$ The following result characterizes such maps.
We let $\cal B^{\dagger}_+$ denote the set of positive linear functionals on $\cal B.$

\begin{thm}\label{semispectral2} Let $\cal B$ be a $C^\ast$-algebra, let $\cal H$ be a Hilbert space and let $\gamma: \cal H \to \cal B^{\dagger}_+$ be a map. Then there exists a positive linear map, $\Phi: \cal B \to B(\cal H)$ such that $\gamma(h)(b) = \langle \Phi(b)h,h \rangle$ for every $b \in \cal B$ and every $h \in \cal H$ if and only if
\begin{enumerate}
\item $\gamma( \lambda h) = |\lambda|^2 \gamma(h)$ for every $\lambda \in \bb C$ and every $h \in \cal H,$
\item there is a constant $C$ such that $\|\gamma(h)\| \le C \|h\|^2,$ for every $h \in \cal H,$
\item $\gamma(h+k) + \gamma(h-k) = 2 \gamma(h) + 2 \gamma(k),$ for every $h,k \in \cal H.$
\end{enumerate}
Moreover, if $\cal B$ is unital, then $\Phi$ is unital if and only if $\gamma(h)$ is a state for every $\|h\|=1.$
\end{thm}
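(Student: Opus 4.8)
The plan is to prove the two implications separately, with the substantive (``if'') direction resting entirely on Proposition~\ref{quadratic} applied one element of $\cal B$ at a time.

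For the ``only if'' direction, assume $\Phi$ exists. Since a positive linear map between $C^\ast$-algebras is bounded, each of (1)--(3) is an immediate consequence of an elementary property of the scalar quantity $\langle \Phi(b)h,h\rangle$: degree-$2$ homogeneity in $h$ gives (1); $\|\gamma(h)\| = \sup_{\|b\|\le 1}\mod{\langle \Phi(b)h,h\rangle} \le \|\Phi\|\,\|h\|^2$ gives (2); and the parallelogram law for the inner product of $\cal H$, applied with $\Phi(b)$ in place of the identity and read off entrywise in $b$, gives (3). I would spell this out in a couple of lines.

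For the ``if'' direction, fix $b \in \cal B$ and define $\phi_b : \cal H \to \bb C$ by $\phi_b(h) = \gamma(h)(b)$. Conditions (1) and (3) of the theorem say exactly that $\phi_b$ satisfies hypotheses (1) and (3) of Proposition~\ref{quadratic}, while $\mod{\phi_b(h)} = \mod{\gamma(h)(b)} \le \|\gamma(h)\|\,\|b\| \le C\|b\|\,\|h\|^2$ gives hypothesis (2) with constant $C\|b\|$. Hence there is a unique operator $\Phi(b) := T_b \in B(\cal H)$ with $\gamma(h)(b) = \langle \Phi(b)h,h\rangle$ for every $h \in \cal H$ and $\|\Phi(b)\| \le 2C\|b\|$. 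This already yields the defining identity and shows $\Phi$ is bounded, so it remains only to check that $b \mapsto \Phi(b)$ is linear and positive. Linearity is forced by uniqueness: for $b,c \in \cal B$ and scalars $\alpha,\beta$ one has $\langle \Phi(\alpha b + \beta c)h,h\rangle = \gamma(h)(\alpha b + \beta c) = \alpha\langle \Phi(b)h,h\rangle + \beta\langle \Phi(c)h,h\rangle$ for all $h$, and since a bounded operator on a complex Hilbert space is determined by its quadratic form (polarization, or equivalently the uniqueness clause of Proposition~\ref{quadratic}), $\Phi(\alpha b + \beta c) = \alpha\Phi(b)+\beta\Phi(c)$. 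For positivity, if $b \ge 0$ then each $\gamma(h)$ is a positive functional, so $\langle \Phi(b)h,h\rangle = \gamma(h)(b) \ge 0$ for every $h$, whence $\Phi(b)\ge 0$; taking $b=b^\ast$ the same argument shows $\langle\Phi(b)h,h\rangle\in\bb R$, so $\Phi$ is $\ast$-preserving, although this is not strictly needed.

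For the ``moreover'' clause, assume $\cal B$ is unital. Then $\Phi(1)=I$ if and only if $\langle\Phi(1)h,h\rangle = \|h\|^2$ for all $h$, i.e. $\gamma(h)(1)=\|h\|^2$; by the homogeneity (1) this holds for all $h$ precisely when it holds for unit vectors, that is, when $\gamma(h)(1)=1$ whenever $\|h\|=1$, and since $\gamma(h)$ is a positive functional this last condition is equivalent to $\gamma(h)$ being a state. I do not anticipate any real obstacle; the only point requiring care is the repeated passage from identities that hold pointwise in $h\in\cal H$ to genuine operator identities, and that is exactly the role played by the uniqueness statement in Proposition~\ref{quadratic}.
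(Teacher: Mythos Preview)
Your proposal is correct and follows essentially the same route as the paper: fix $b\in\cal B$, apply Proposition~\ref{quadratic} to the quadratic form $h\mapsto\gamma(h)(b)$ to obtain $\Phi(b)$, and read off linearity and positivity from those of each $\gamma(h)$. You supply more detail than the paper does (including the easy ``only if'' direction, which the paper omits), but the argument is the same.
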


\begin{proof} For each fixed $b \in \cal B,$ we see that $h \to \gamma(h)(b)$ is a bounded quadratic form. Hence, there exists an operator $\Phi(b) \in B(\cal H)$ such that $\gamma(h)(b) = \langle \Phi(b)h,h \rangle .$  The linearity and positivity of the map $b \to \Phi(b)$ now follows from the fact that each $\gamma(h)$ is linear and positive.  

Finally, $\Phi(I)=I$ if and only if $1 = \langle \Phi(I)h,h \rangle = \gamma(h)(I),$ for every unit vector $h$.
\end{proof}

\begin{cor}[Mlak's Dilation Theorem for Semispectral Measures] Let $X$ be a locally compact Hausdorff space, let $\cal B(X)$ be the $\sigma$-algebra of Borel measurable subsets of $X$ and let $\cal H$ be a Hilbert space and assume that for each $h \in \cal H$ we are given a positive, regular Borel measure $\mu_h$ on $X,$ satisfying:
\begin{enumerate}
\item $\mu_{\lambda h} = |\lambda|^2 \mu_h,$ for every $\lambda \in \bb C$ and every $h \in \cal H,$
\item there is a constant $C$ such that $\mu_h(X) \le C \|h\|^2,$ for every $h \in \cal H,$
\item $\mu_{h+k} + \mu_{h-k} = 2 \mu_{h} + 2 \mu_k,$ for every $h,k \in \cal H.$
\end{enumerate}
Then there exists a Hilbert space $\cal K$, a linear map $V: \cal H \to \cal K$ and a regular Borel projection-valued measure, $E: \cal B(X) \to B(\cal K)$ such that for every Borel set, $B \subseteq X,$ and every vector $h \in \cal H,$ we have $\mu_h(B) = \langle E(B)Vh, Vh \rangle.$
\end{cor}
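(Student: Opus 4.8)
The plan is to combine Theorem~\ref{semispectral2} with the classical Naimark dilation theorem for operator-valued measures. First I would set $\cal B = C_0(X)$ (or, to stay in the unital setting, work with the one-point compactification and $\cal B = C(X^+)$, restoring regularity at the end via the Riesz representation theorem). For each $h \in \cal H$ the given positive regular Borel measure $\mu_h$ defines, by integration, a positive linear functional $\gamma(h) \in \cal B^{\dagger}_+$, namely $\gamma(h)(f) = \int_X f \, d\mu_h$. Conditions (1), (2), (3) on the measures translate verbatim into conditions (1), (2), (3) of Theorem~\ref{semispectral2}: $\mu_{\lambda h} = |\lambda|^2 \mu_h$ gives $\gamma(\lambda h) = |\lambda|^2 \gamma(h)$; the uniform bound $\mu_h(X) \le C\|h\|^2$ gives $\|\gamma(h)\| = \mu_h(X) \le C\|h\|^2$; and the parallelogram law for the measures passes to the functionals since integration is linear in the measure variable.

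Having verified the hypotheses, Theorem~\ref{semispectral2} produces a positive linear map $\Phi: C_0(X) \to B(\cal H)$ with $\langle \Phi(f)h, h\rangle = \int_X f\, d\mu_h$ for all $f$ and $h$. Next I would invoke Naimark's dilation theorem (equivalently, Stinespring for the positive — hence, on a commutative domain, completely positive — map $\Phi$): there is a Hilbert space $\cal K$, a bounded linear map $V: \cal H \to \cal K$, and a $\ast$-representation $\pi: C_0(X) \to B(\cal K)$ with $\Phi(f) = V^\ast \pi(f) V$. Since $\pi$ is a representation of a commutative $C^\ast$-algebra, its spectral theorem yields a regular Borel projection-valued measure $E: \cal B(X) \to B(\cal K)$ with $\pi(f) = \int_X f\, dE$. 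Then for every $f \in C_0(X)$ and $h \in \cal H$,
\[
\int_X f \, d\mu_h = \langle \Phi(f)h, h\rangle = \langle \pi(f)Vh, Vh\rangle = \int_X f \, d\langle E(\cdot)Vh, Vh\rangle,
\]
so the scalar measures $\mu_h$ and $B \mapsto \langle E(B)Vh, Vh\rangle$ agree against all of $C_0(X)$; by the uniqueness part of the Riesz representation theorem they coincide on all Borel sets, giving $\mu_h(B) = \langle E(B)Vh, Vh\rangle$ as desired.

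The main obstacle is not really a deep one — it is bookkeeping about the non-compact case and regularity. Specifically, if $X$ is merely locally compact one must decide whether to apply Theorem~\ref{semispectral2} to $C_0(X)$ directly (noting that Theorem~\ref{semispectral2} as stated does not assume $\cal B$ unital, so the positivity argument via bounded quadratic forms in Proposition~\ref{quadratic} still applies) or to compactify; in either route one must check that the resulting projection-valued measure $E$ is regular, which follows from the regularity of the spectral measure of a representation of a commutative $C^\ast$-algebra together with regularity of each $\mu_h$. A secondary point is that Naimark's theorem is not proved in the excerpt, so strictly one should cite it (e.g.\ \cite{Pa}); all other ingredients — the quadratic-form lemma, the semispectral characterization, and the elementary translation of the hypotheses — are already available above. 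I do not anticipate any essential difficulty beyond these standard measure-theoretic checks.
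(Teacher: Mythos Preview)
Your proposal is correct and follows essentially the same route as the paper: apply Theorem~\ref{semispectral2} to $C_0(X)$ to get a positive map $\Phi$, note it is automatically completely positive since the domain is commutative, dilate via Stinespring to a $\ast$-homomorphism $\pi$, and read off the projection-valued measure $E$ from $\pi$. Your write-up is in fact more explicit than the paper's (you spell out the translation of hypotheses, the Riesz uniqueness step, and the regularity bookkeeping), but the structure is identical.
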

\begin{proof} By Theorem~\ref{semispectral2} there exists a positive map $\Phi: C_0(X) \to B(\cal H),$ with $\langle \Phi(f) h,h \rangle = \int_X f d \mu_h ,$ for every $h \in \cal H.$ By Stinespring's theorem ~\cite{St}, this positive map is completely positive. Therefore the map $\Phi$ dilates, that is, there exists a $\ast$-homomorphism $\pi: C_0(X) \to B( \cal K)$ and a map $V: \cal H \to \cal K$ such that $\Phi(f) = V^*\pi(f)V.$
The result now follows by letting $E$ be the regular Borel projection-valued measure such that $\pi(f) = \int_X f dE.$
\end{proof}

It would be interesting to find a similar characterization of maps $\gamma$ of the form $\gamma(h)(b) = \langle \Phi(b)h,h \rangle$ such that $\Phi: \cal B \to B(\cal H)$ is a {\em completely} positive map that did not reduce to a simple tautology. 

\section{Representations of Logmodular Algebras}

In this section we obtain our principal results on representations.

Given operator spaces $X$ and $Y$ and a linear map $\psi:X \to Y,$ we call $\psi$ {\em $R_n$-contractive} provided that 
\[\|(\psi(x_1), \ldots, \psi(x_n))\| \le \|(x_1, \ldots, x_n)\|,\] 
for every $x_1, \ldots, x_n \in X$ and {\em row contractive} if it is $R_n$-contractive for every $n$. We define {\em $C_n$-contractive} and {\em column contractive}, analogously.

\begin{lemma}\label{colimpliescc}
Let $X$ be an operator space and $\psi:X\to \cal{H}_c$ (respectively, $\cal H_r$) be a linear map. If $\psi$ is column contractive (respectively, row contractive), then $\psi$ is completely contractive.
\end{lemma}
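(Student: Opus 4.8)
The plan is to reduce the complete contractivity of $\psi:X\to\cal H_c$ to the hypothesis of column contractivity by testing amplified matrices against scalar column vectors; this is essentially the general–operator-space version of the implication $(1)\Rightarrow(3)$ from the earlier $2$-summing lemma, with the commutative argument there replaced by the bimodule structure of $M_n(X)$. Fix $n\ge 1$ and a matrix $(x_{ij})\in M_n(X)$; the goal is to prove $\norm{(\psi(x_{ij}))}_{M_n(\cal H_c)}\le\norm{(x_{ij})}_{M_n(X)}$.

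First I would identify the norm on $M_n(\cal H_c)$ concretely. Using $\cal H_c=B(\bb C,\cal H)$, one has $M_n(\cal H_c)\subseteq B(\bb C^n,\cal H^{(n)})$ isometrically; an element corresponds to a matrix $(h_{ij})$ of vectors of $\cal H$, and it sends $\xi=(\xi_1,\dots,\xi_n)\in\bb C^n$ to the vector whose $i$th coordinate is $\sum_j\xi_j h_{ij}$. Applied to the entries $h_{ij}=\psi(x_{ij})$ and using linearity of $\psi$ to pull the scalars through, this gives
\[
\norm{(\psi(x_{ij}))}^2_{M_n(\cal H_c)}=\sup\Big\{\,\sum_{i=1}^n\norm{\psi\big(\textstyle\sum_{j=1}^n\xi_j x_{ij}\big)}^2_{\cal H}\ :\ \sum_{j=1}^n\mod{\xi_j}^2=1\,\Big\}.
\]
Here $\big(\sum_i\norm{\psi(\sum_j\xi_j x_{ij})}^2\big)^{1/2}$ is precisely the $M_{n,1}(\cal H_c)$-norm of the column $\big(\psi(\sum_j\xi_j x_{ij})\big)_{i=1}^n$, since the column norm in $\cal H_c$ is the $\ell^2$-sum of the norms of the entries.

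Next, for each fixed unit vector $\xi$ set $y_i=\sum_j\xi_j x_{ij}\in X$. Column contractivity of $\psi$ gives $\norm{(\psi(y_i))_i}_{M_{n,1}(\cal H_c)}\le\norm{(y_i)_i}_{M_{n,1}(X)}$. Now $(y_i)_i=(x_{ij})\,\xi$ is the product of the $M_n(X)$-element $(x_{ij})$ with the scalar column $\xi\in M_{n,1}$, so Ruan's first axiom $\norm{\alpha z\beta}\le\norm{\alpha}\,\norm{z}\,\norm{\beta}$ for scalar $\alpha,\beta$ yields $\norm{(y_i)_i}_{M_{n,1}(X)}\le\norm{(x_{ij})}_{M_n(X)}\norm{\xi}_2=\norm{(x_{ij})}_{M_n(X)}$. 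Taking the supremum over $\xi$ completes the estimate, and since $n$ was arbitrary, $\psi$ is completely contractive. For the parenthetical statement I would run the identical argument with $\cal H_r=B(\cal H,\bb C)$ in place of $\cal H_c$, testing against unit row vectors $\nu\in M_{1,n}$ and replacing $(x_{ij})\xi$ by $\nu\,(x_{ij})\in M_{1,n}(X)$, so that row contractivity plays the role of column contractivity.

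I do not anticipate a genuine obstacle; the only point requiring care is the bookkeeping in the first step—identifying $M_n(\cal H_c)$ with a space of operators $\bb C^n\to\cal H^{(n)}$, reading off that its norm is the displayed supremum over $\xi$, and checking that the $M_{n,1}(\cal H_c)$- and $M_{1,n}(\cal H_r)$-norms are plain $\ell^2$-sums of vector norms. Once that is set up, the conclusion is an immediate combination of column (resp. row) contractivity with the bimodule bound on operator space matrix norms.
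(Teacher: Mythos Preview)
Your proof is correct and follows essentially the same route as the paper: identify $M_n(\cal H_c)$ with operators $\bb C^n\to\cal H^{(n)}$, test against unit scalar columns $\xi$, apply column contractivity to the resulting column $(y_i)_i=(x_{ij})\xi$, and bound $\norm{(x_{ij})\xi}\le\norm{(x_{ij})}$ via the bimodule axiom. The paper's write-up is slightly more compressed (writing $(x_{i,j})=[C_1,\ldots,C_n]$ in columns and computing $\norm{(\psi(x_{i,j}))\lambda}=\norm{\psi_{m,1}(\sum_j\lambda_jC_j)}$ directly), but the underlying argument is identical.
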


\begin{proof}
Begin by noting that we may identify $M_{m,n}(\cal{H}_c)$ with $B(\bb{C}^n,\cal{H}^{(m)})$. Let $(x_{i,j})\in M_{m,n}(X)$ and let $\lambda=\begin{bmatrix}\lambda_1 \\ \vdots \\\lambda_n\end{bmatrix}\in \bb{C}^n$, with $\sum_{j=1}^n \mod{\lambda_j}^2=1$. Write $(x_{i,j})=[C_1,\ldots,C_n]$, $C_j\in M_{m,1}(X)$.  We have, 
\[\norm{(\psi(x_{i,j}))\lambda}=\norm{\psi_{m,1}\left(\sum_{j=1}^n \lambda_j C_j\right)}\leq \norm{\sum_{j=1}^n \lambda_j C_j}=\norm{(x_{i,j})\lambda}\leq \norm{(x_{i,j})}. \]
Hence, $\psi$ is completely bounded and $\norm{\psi}_{cb}=\norm{\psi}_{col}$. 

The row case is analogous.
\end{proof}

\begin{thm}\label{tworowimpliescc}
Let $\cal B$ be a $C^\ast$-algebra, let $\cal A \subseteq \cal B$ be logmodular and let $\rho:\cal{A}\to B(\cal{H})$ be a $R_2$-contractive representation, then $\rho$ is row contractive. If $\rho$ is $C_2$-contractive, then $\rho$ is column contractive.
\end{thm}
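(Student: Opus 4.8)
The plan is to prove the $R_2$ statement; the $C_2$ case is symmetric. The key idea is that $R_2$-contractivity of a \emph{homomorphism} bootstraps to $R_n$-contractivity for all $n$ by an induction that peels off one coordinate at a time, using the logmodular hypothesis to absorb an invertible ``outer'' factor. So first I would unwind what $R_2$-contractivity gives us concretely: for $a,b\in\cal A$ with $\|(a,b)\|\le 1$ in $M_{1,2}(\cal B)$, i.e. $aa^*+bb^*\le 1$, we get $\rho(a)\rho(a)^*+\rho(b)\rho(b)^*\le I$. The goal is to show: given $a_1,\dots,a_n\in\cal A$ with $\sum a_ia_i^*\le 1$, one has $\sum\rho(a_i)\rho(a_i)^*\le I$.

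The main step is a factorization trick. Suppose $\sum_{i=1}^n a_ia_i^*\le 1$ in $\cal B$. Then $\sum_{i=2}^n a_ia_i^*\le 1-a_1a_1^*\le 1$, and I want to ``divide out'' so as to reduce the number of terms. Here logmodularity enters: I would approximate $1 - a_1a_1^* + \varepsilon$ (a positive invertible element) by $c^*c$ with $c\in\cal A^{-1}$, or rather — since the relevant density is of $\{a^*a\}$ and of $\{aa^*\}$ in $\cal B_+^{-1}$ — approximate a suitable positive invertible element by $cc^*$ with $c\in\cal A^{-1}$, so that $\sum_{i=2}^n a_i a_i^* \lesssim cc^*$. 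Writing $a_i = (c^{-1}a_i)\cdot$(something) isn't quite right since $c^{-1}a_i$ need not lie in $\cal A$; the correct maneuver is to consider $b_i = c^{-1}a_i$ only after one has arranged that $a_i = c\, b_i$ with $b_i\in\cal A$, which is exactly the kind of matrix factorization the paper flags as available (Cholesky/nest factorization in examples, and more importantly the abstract density). Concretely: from $\sum_{i\ge 2} a_ia_i^* \le cc^*$ one gets $\sum_{i\ge2}(c^{-1}a_i)(c^{-1}a_i)^*\le 1$; if the $c^{-1}a_i$ were in $\cal A$, the inductive hypothesis ($R_{n-1}$) would give $\sum_{i\ge 2}\rho(c^{-1}a_i)\rho(c^{-1}a_i)^*\le I$, and then multiplying by $\rho(c)$ on the left (using that $\rho$ is a homomorphism, so $\rho(c)\rho(c^{-1}a_i) = \rho(a_i)$, and that $\rho(c)\rho(c)^*\le\|c\|^2\cdots$) one would recover an inequality for $\sum_{i\ge2}\rho(a_i)\rho(a_i)^*$, which combined with the $R_2$-hypothesis applied to the pair $\bigl(a_1,\ (\sum_{i\ge2}a_ia_i^*)^{1/2}\,\text{via a single }\cal A\text{-element}\bigr)$ closes the loop. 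So the real content is: \emph{the logmodular hypothesis lets us write a column of $\cal A$-elements whose ``norm square'' $\sum a_ia_i^*$ is a positive invertible element in the form $c\cdot(\text{column of }\cal A\text{-elements})$ with $c\in\cal A^{-1}$ and the new column having norm $\le 1$.}

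Thus the skeleton is: (i) reduce to the case $\sum a_i a_i^*$ strictly positive and invertible by the usual $\varepsilon$-perturbation and a limiting argument; (ii) use density of $\{aa^* : a\in\cal A^{-1}\}$ in $\cal B_+^{-1}$ to write $\sum_{i=1}^{n} a_i a_i^* + \varepsilon \approx cc^*$ with $c\in\cal A^{-1}$, hence $\sum a_ia_i^* \le cc^*$, so each $a_i = c b_i$ for a unique $b_i = c^{-1}a_i$; the point I must verify is that $b_i\in\cal A$ (not merely in $\cal B$) — this is where I expect to work, and it likely requires either a separate matrix-factorization lemma for logmodular algebras or an inner-function style argument using that $c$ is a two-sided multiplier fixing $\cal A$; (iii) observe $\sum b_ib_i^*\le 1$, feed into the inductive hypothesis for $n$ terms replaced by... actually this does not lower $n$, so the induction must instead be set up differently — peel one term: from $a_1a_1^*\le 1$ and the pair structure, reduce $\sum_{i=1}^n$ to a two-term inequality $\bigl\|(a_1, d)\bigr\|\le 1$ where $d\in\cal A$ satisfies $dd^* = \sum_{i=2}^n a_ia_i^*$ (obtained by factoring the positive invertible $\sum_{i\ge2}a_ia_i^*+\varepsilon$ as $dd^*$ with $d\in\cal A^{-1}$), apply $R_2$ to get $\rho(a_1)\rho(a_1)^* + \rho(d)\rho(d)^*\le I$, and then bound $\sum_{i\ge2}\rho(a_i)\rho(a_i)^*$ by $\rho(d)\rho(d)^*$ by an inductive argument on $n-1$ applied to the factorization $a_i = d e_i$ with $e_i\in\cal A$ and $\sum e_ie_i^*\le 1$. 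The \textbf{main obstacle}, as indicated, is establishing that the quotients $d^{-1}a_i$ (and likewise $c^{-1}a_i$) stay inside $\cal A$: abstractly $\cal A^{-1}$-elements are invertible in $\cal B$ but $\cal A$ is only a subalgebra, so this needs the logmodular structure crucially, and it is the step where a matrix-factorization result for logmodular algebras — precisely the kind the abstract promises — must be invoked. Once that is in hand, (iv) pass to the limit in $\varepsilon$ and in the logmodular approximations to conclude $\sum_{i=1}^n\rho(a_i)\rho(a_i)^*\le I$ for all $n$, i.e. $\rho$ is row contractive; then apply Lemma~\ref{colimpliescc} if one wants the completely contractive conclusion in the sequel, though the theorem as stated only asks for row contractivity.
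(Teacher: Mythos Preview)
Your overall strategy---induction on $n$, peel off one coordinate, use logmodularity to produce an invertible $c\in\cal A^{-1}$ dominating the remaining sum, apply the inductive hypothesis to the $c^{-1}a_i$, then close with the $R_2$ hypothesis---is exactly the paper's. But the ``main obstacle'' you flag is a phantom. By definition (stated in the paper's preliminaries), $\cal A^{-1}$ denotes the elements of $\cal A$ that are invertible \emph{in $\cal A$}, not merely in $\cal B$. So if $c\in\cal A^{-1}$ then $c^{-1}\in\cal A$, and since $\cal A$ is a subalgebra, $c^{-1}a_i\in\cal A$ automatically. No matrix-factorization lemma, no inner-function argument, nothing: it is immediate. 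Once you see this, your sketch collapses to a short clean argument.

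The one place where your execution is slightly looser than the paper's is the approximation. You write ``$d\in\cal A$ satisfies $dd^*=\sum_{i\ge2}a_ia_i^*$,'' but logmodularity only gives density, not exact factorization, and $\sum_{i\ge2}a_ia_i^*$ need not be invertible. The paper handles both issues in one stroke: given $\sum_{j=1}^n a_ja_j^*\le 1$ and $\varepsilon>0$, choose $b\in\cal A^{-1}$ with
\[
\sum_{j=1}^{n-1} a_ja_j^* \;<\; bb^* \;<\; 1+\varepsilon - a_na_n^*,
\]
which is possible since the left and right sides are positive invertibles with a gap. The left inequality gives $\sum_{j=1}^{n-1}(b^{-1}a_j)(b^{-1}a_j)^*\le 1$, so induction yields $\sum_{j=1}^{n-1}\rho(a_j)\rho(a_j)^*\le\rho(b)\rho(b)^*$; the right inequality gives $bb^*+a_na_n^*<1+\varepsilon$, so $R_2$ yields $\rho(b)\rho(b)^*+\rho(a_n)\rho(a_n)^*\le 1+\varepsilon$. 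Combine and let $\varepsilon\to 0$.
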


\begin{proof}
Let $a_1,\ldots,a_n \in \cal{A}$ with $\sum_{j=1}^n a_ja_j^\ast \leq 1$. We will prove our result by induction on $n$. The case $n=2$ is our hypothesis. Now let $\epsilon >0$ and choose $b\in \cal{A}^{-1}$ such that $\sum_{j=1}^{n-1} a_ja_j^\ast < bb^\ast <1+\epsilon -a_na_n^\ast$. This choice being possible since $\cal{A}$ is logmodular. Set $b_j=b^{-1} a_j$ and note that $\sum_{j=1}^{n-1} b_jb_j^\ast=\sum_{j=1}^{n-1} b^{-1}(\sum_{j=1}^{n-1} a_ja_j^\ast)b^{\ast -1}\leq 1$, hence the induction hypothesis yields $\sum_{j=1}^{n-1}\rho(b_j)^\ast \rho(b_j) \leq 1$ or equivalently 
$$\sum_{j=1}^{n-1} \rho(b^{-1}a_j) \rho(b^{-1}a_j)^\ast =\rho(b)^{-1} (\sum_{j=1}^{n-1} \rho(a_j)\rho(a_j)^\ast) \rho(b)^{\ast -1}\leq 1.$$

Applying the $R_2$ condition again we get
$$\sum_{j=1}^n \rho(a_j)\rho(a_j)^\ast \leq \rho(b)\rho(b)^\ast +\rho(a_n)\rho(a_n)^\ast \leq 1+\epsilon.$$
Since this is true for all $\epsilon$ we have our result.

The case of $C_2$-contractive is similar.
\end{proof}

\begin{prop}\label{unique} Let $\cal{A} \subseteq \cal{B}$ be logmodular, let $\rho: \cal{A} \to B(\cal H)$ be a unital homomorphism and let $h \in \cal H$ be a unit vector. If $\phi, \psi: \cal B \to \bb C$ are states such that $\|\rho(a)h \|^2 \le \phi(a^*a)$ and $\|\rho(a)^*h\|^2 \le \psi(aa^*)$ for all $a \in \cal A^{-1},$ then $\phi = \psi.$ 
\end{prop}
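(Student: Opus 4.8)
The plan is to exploit logmodularity to show that $\phi$ and $\psi$ agree on a dense subset of $\cal B$, namely on the positive invertible elements, and then invoke continuity and linearity. The natural object to track is the logarithm: for $a \in \cal A^{-1}$ we have $a^*a, aa^* \in \cal B_+^{-1}$, and logmodularity tells us that $\{a^*a : a \in \cal A^{-1}\}$ is dense in $\cal B_+^{-1}$. So the first step is to understand what the hypothesis $\|\rho(a)h\|^2 \le \phi(a^*a)$ really gives us. Writing $v_a := \rho(a)h$, note that $\rho$ being a unital homomorphism forces $\rho(a^{-1}) = \rho(a)^{-1}$ on $\cal A^{-1}$, so applying the hypothesis to $a^{-1}$ gives $\|\rho(a)^{-1}h\|^2 \le \phi(a^{-1}(a^{-1})^*) = \phi((aa^*)^{-1})$. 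Combining this with the Cauchy–Schwarz-type inequality $1 = \|h\|^2 = \langle \rho(a)^{-1}\rho(a)h, h\rangle \le \|\rho(a)^{-1}h\|\,\|\rho(a)h\|$ (using $\|\rho(a)^{-1*}h\|$ more carefully — in fact $1 = \langle \rho(a)h, \rho(a)^{-1*}h\rangle$), I expect to extract $\phi(a^*a)\,\phi((aa^*)^{-1}) \ge 1$, i.e. $\phi(p)\phi(\text{something like }p^{-1}) \ge 1$ for the relevant positive invertibles $p$.

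The key analytic input is the operator/Jensen-type inequality: for a state $\phi$ and a positive invertible $p$, one has $\phi(p^{-1}) \ge \phi(p)^{-1}$, equivalently $\phi(p)\phi(p^{-1}) \ge 1$, with a matching lower bound coming from $\log$: $\phi(\log p) \le \log \phi(p)$ and $\phi(\log p) \ge -\log\phi(p^{-1}) = \log\phi(p^{-1})^{-1}$. The point of the hypotheses, together with their application to $a^{-1}$, should pin down $\|\rho(a)h\|^2 = \phi(a^*a)$ exactly — not just an inequality — because the product of the two opposite inequalities is forced to be an equality by $\phi(a^*a)\phi((aa^*)^{-1})\ge 1$ meeting $\|\rho(a)h\|^2\|\rho(a)^{-1}h\|^2 \ge 1 \ge \dots$. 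Then the same reasoning with $\psi$ gives $\|\rho(a)^*h\|^2 = \psi(aa^*)$. Now I would compare: we want $\phi$ and $\psi$ to agree, but a priori one controls $a^*a$ and the other $aa^*$. Here one uses that $\cal A$ logmodular implies $\cal A^*$ is logmodular (stated in the excerpt), and that $\{aa^* : a \in \cal A^{-1}\} = \{b^*b : b \in (\cal A^*)^{-1}\}$ is \emph{also} dense in $\cal B_+^{-1}$; so both $\phi$ and $\psi$ are determined on $\cal B_+^{-1}$ by the single representation $\rho$ via the vector $h$ — but through different-looking formulas, and I must reconcile $\|\rho(a)h\|^2$ with $\|\rho(a)^*h\|^2$.

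The cleaner route, which I would actually pursue, is: from $\|\rho(a)h\|^2 = \phi(a^*a)$ for all $a \in \cal A^{-1}$, take $a = e^{c}$ where $c$ ranges over a set whose real parts are dense (using the logmodular definition directly at the level of $\cal A \subseteq C(X)$ — but here $\cal B$ is a general $C^*$-algebra, so instead use the density of $\{a^*a\}$ in $\cal B_+^{-1}$ directly). Plugging $a$ and $ta^{-1}$ type perturbations and differentiating, or more simply using that any self-adjoint $x \in \cal B$ is a norm-limit of $\log(a^*a)$'s, one gets $\phi$ determined on $\{\log(a^*a)\}$, which is dense in $\cal B_{sa}$; likewise $\psi$ on $\{\log(aa^*)\}$, also dense in $\cal B_{sa}$. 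The bridge is that $\log(aa^*)$ and $\log(a^*a)$ need not be close, so I cannot directly conclude $\phi = \psi$ from these alone. The resolution I anticipate: show that for $a \in \cal A^{-1}$, $\phi(a^*a) = \|\rho(a)h\|^2$ and (applying the first hypothesis to $a$ and using the polar-type identity $a^*a$ vs $aa^*$ related by the partial isometry in the polar decomposition, which is unitary since $a$ is invertible) relate $\phi$ on $\cal A^*\cal A$-generated elements to $\psi$. The main obstacle, I expect, is precisely this last reconciliation — passing from ``$\phi$ agrees with $\|\rho(\cdot)h\|^2$ on $a^*a$'' and ``$\psi$ agrees with $\|\rho(\cdot)^*h\|^2$ on $aa^*$'' to ``$\phi=\psi$ on all of $\cal B$'' — and it will hinge on establishing the \emph{equality} (not mere inequality) in the defining bounds via the Jensen/arithmetic-geometric mechanism above, after which density of positive invertibles and the fact that $\cal B_{sa}$ is spanned by $\log$'s of such elements closes the argument by linearity and norm-continuity of states.
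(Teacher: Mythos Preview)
Your proposal has a genuine gap at its central step. You aim to upgrade the inequality $\|\rho(a)h\|^2 \le \phi(a^*a)$ to an equality by a ``squeeze'' argument, but the squeeze does not close. The Cauchy--Schwarz step $1 = \langle \rho(a)h, \rho(a^{-1})^*h\rangle \le \|\rho(a)h\|\,\|\rho(a^{-1})^*h\|$ involves $\|\rho(a^{-1})^*h\|$, which is controlled by the $\psi$ hypothesis (applied to $a^{-1}$), \emph{not} the $\phi$ hypothesis. So what one actually obtains is the \emph{mixed} inequality
\[
1 \le \|\rho(a)h\|^2\,\|\rho(a^{-1})^*h\|^2 \le \phi(a^*a)\,\psi((a^*a)^{-1}),
\]
not $\phi(a^*a)\,\phi((aa^*)^{-1}) \ge 1$. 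There is no matching upper bound on $\phi(a^*a)$ available, and Jensen's inequality $\phi(p)\phi(p^{-1}) \ge 1$ points the wrong way for a squeeze. Consequently the conclusion $\|\rho(a)h\|^2 = \phi(a^*a)$ cannot be extracted by this route, and the rest of the plan (determining $\phi$ and $\psi$ separately from $\rho$ and then reconciling) stalls exactly where you yourself flag the obstacle.

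The paper's argument uses the mixed inequality directly and never attempts to prove equality in the hypotheses. By logmodularity, for any $x = x^* \in \cal B$ and $t \in \bb R$ one may approximate $e^{tx}$ by elements $a^*a$ with $a \in \cal A^{-1}$, yielding $u(t) := \phi(e^{tx})\psi(e^{-tx}) \ge 1$ for all $t$, with $u(0)=1$. Since $t=0$ is therefore a minimum of the smooth function $u$, one has $0 = u'(0) = \phi(x)\psi(1) - \phi(1)\psi(x) = \phi(x) - \psi(x)$. This gives $\phi = \psi$ on $\cal B_{sa}$ and hence on $\cal B$. You mention differentiation in passing but discard it in favor of the density/equality strategy; in fact the first-variation computation on the mixed product is the entire proof.
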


\begin{proof}
For any $a \in \cal A^{-1},$ we have
\begin{align*}
1&=\norm{h}^4=\inp{h}{h}^2=\inp{\rho(a)^{-1}\rho(a) h}{h}^2\\
&= \inp{\rho(a)h}{\rho(a^{-1})^\ast h}^2\leq \norm{\rho(a)h}^2\norm{\rho(a^{-1})^\ast h}^2\\
&\leq  \phi(a^\ast a)\psi(a^{-1}(a^\ast)^{-1})=\phi(a^\ast a)\psi((a^\ast a)^{-1}).
\end{align*}
For $x\in \cal{B}_{sa}$ and $t\in \bb{R}$, $e^{tx}$ is a positive invertible element of $\cal{B}$ and so the above inequality gives
$$1\leq \phi(e^{tx})\psi(e^{-tx}).$$
Let $u(t)=\phi(e^{tx})\psi(e^{-tx})$ and note that this is a differentiable function with a minimum at $t=0$. Taking the derivative and setting $t=0$ we get
\[0=u'(0)=\phi(xe^{tx})-\psi(xe^{-tx})|_{t=0}=\phi(x)-\psi(x).\] Since this holds for all selfadjoint elements in $\cal{B}$ and $\phi,\psi$ are selfadjoint maps we get $\phi=\psi$.
\end{proof}

Note that the above result does not guarantee that such a state $\phi$
exists, nor does it guarantee that if $\phi$ exists satisfying,
$\|\rho(a)h \|^2 \le \phi(a^*a)$, then $\|\rho(a)^*\| \le \phi(aa^*).$

\begin{prop}\label{dominatingrep}
Let $\cal{A}\subseteq \cal{B}$ be logmodular and let $\rho:\cal{A}\to
B(\cal{H}) $ be a representation which is $R_2$-contractive and $C_2$-
contractive. If $h\in \cal{H}$ is fixed, then there exists a
positive linear functional $\phi:\cal{B}\to \bb{C}$ such that
$\phi(1)= \|h\|^2$, $\norm{\rho(a)h}^2\leq \phi(a^\ast a)$, $\norm{\rho(a)^\ast h}^2\leq \phi(aa^\ast)$ and $\phi(a)=\inp{\rho(a)h}{h}$ and such a $\phi$ is unique.
\end{prop}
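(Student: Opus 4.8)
The strategy is to combine the three non-commutative tools already assembled: the dominating-state theorem (Theorem~\ref{dominatingstate}), the quadratic-form characterization of positive maps (Theorem~\ref{semispectral2}), and the uniqueness result (Proposition~\ref{unique}). Without loss of generality normalize $\|h\|=1$, since for $h=0$ the zero functional works and otherwise we rescale at the end by $\|h\|^2$ and use condition (1) of Theorem~\ref{semispectral2}-style homogeneity. By Theorem~\ref{tworowimpliescc}, the hypotheses upgrade: $\rho$ is \emph{row} contractive and \emph{column} contractive. Row contractivity of $\rho:\cal A\to B(\cal H)$ says exactly that the map $a\mapsto \rho(a)h$ from $\cal A$ into $\cal H_r = B(\cal H,\bb C)$—or rather the compression $a \mapsto \langle \rho(a)\cdot, h\rangle$—is completely contractive into row Hilbert space; more precisely, define $\psi_r:\cal A\to\cal H_r$ by $\psi_r(a) = \langle\,\cdot\,,\rho(a)^*h\rangle$ so that $\|\psi_r(a)\|=\|\rho(a)^*h\|$, and check from the description of $M_n(\cal H_r)$ in the proof of Lemma~1 (the $n=1$ row norm computation) that row contractivity of $\rho$ gives complete contractivity of $\psi_r$. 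Apply Theorem~\ref{dominatingstate} (the $\cal H_r$ case) to get a state $s_r$ and a u.c.p.\ map $\Phi_r:\cal B\to B(\cal H)$ with $\|\psi_r(a)\|^2\le s_r(aa^*)$, i.e. $\|\rho(a)^*h\|^2\le s_r(aa^*)$ for all $a\in\cal A$. Symmetrically, column contractivity gives a state $s_c$ with $\|\rho(a)h\|^2\le s_c(a^*a)$ for all $a\in\cal A$.

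**Merging the two states.** Now apply Proposition~\ref{unique} with $\phi=s_c$ and $\psi=s_r$: its hypotheses $\|\rho(a)h\|^2\le s_c(a^*a)$ and $\|\rho(a)^*h\|^2\le s_r(aa^*)$ for $a\in\cal A^{-1}$ hold, so $s_c=s_r=:\phi$. This single state $\phi$ simultaneously satisfies $\|\rho(a)h\|^2\le\phi(a^*a)$ and $\|\rho(a)^*h\|^2\le\phi(aa^*)$ for \emph{all} $a\in\cal A$ (not just invertibles, since the dominating-state conclusions were on all of $\cal X=\cal A$). It remains to check the interpolation property $\phi(a)=\langle\rho(a)h,h\rangle$. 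For $a\in\cal A^{-1}$, the Cauchy--Schwarz computation at the start of the proof of Proposition~\ref{unique} gives $1\le\phi(a^*a)\phi((a^*a)^{-1})$, hence (exactly as there) $\phi$ is multiplicative-at-identity in the infinitesimal sense: writing $a=e^{tx}$ with $x$ selfadjoint and differentiating $t\mapsto\phi(e^{tx})$ — but this alone only controls the selfadjoint \emph{directional derivatives}, which is not yet $\phi(a)=\langle\rho(a)h,h\rangle$.

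**Getting the interpolation identity.** Here is the cleaner route, which I expect to be the technical heart. Fix $a\in\cal A^{-1}$. From $\|\rho(b)h\|^2\le\phi(b^*b)$ applied to $b=a$ and $b=a^{-1}$, together with $\|h\|^2=\langle\rho(a)^{-1}\rho(a)h,h\rangle=\langle\rho(a)h,(\rho(a)^{-1})^*h\rangle$, the Cauchy--Schwarz chain in Proposition~\ref{unique} forces equality throughout, which in a Hilbert space means $\rho(a)h$ and $(\rho(a)^{-1})^*h$ are \emph{parallel}: $(\rho(a)^{-1})^*h = \lambda_a\,\rho(a)h$ for a scalar $\lambda_a$, and pairing with $h$ gives $\lambda_a\|\rho(a)h\|^2=1$, while also $\|\rho(a)h\|^2\le\phi(a^*a)$ and $\|(\rho(a)^{-1})^*h\|^2=|\lambda_a|^2\|\rho(a)h\|^2\le\phi((a^*a)^{-1})$ combine with $1\le\phi(a^*a)\phi((a^*a)^{-1})$ to force $\|\rho(a)h\|^2=\phi(a^*a)$ and $\|(\rho(a)^{-1})^*h\|^2 = \phi((a^*a)^{-1})$, i.e. the dominating inequalities are \emph{equalities} on $\cal A^{-1}$. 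Then for $a,b\in\cal A^{-1}$ polarization of $\|\rho(\cdot)h\|^2=\phi((\cdot)^*(\cdot))$ (using that $\cal A^{-1}$ spans $\cal A$ densely since $\cal A$ is logmodular and $1\in\cal A^{-1}$, so $\cal A = \cal A^{-1} - \cal A^{-1}$ after scaling) yields $\langle\rho(a)h,\rho(b)h\rangle=\phi(b^*a)$ on a dense set; specializing $b=1$ gives $\langle\rho(a)h,h\rangle=\phi(a)$, which extends to all of $\cal A$ by continuity. Finally, to produce the \emph{positive map} $\Phi$ as in the statement's spirit — actually the proposition only asks for the functional $\phi$, so we are done: $\phi(1)=\langle h,h\rangle=1$ (or $\|h\|^2$ before normalization), and uniqueness of $\phi$ is immediate from Proposition~\ref{unique} since any competitor $\phi'$ satisfies the same two dominating inequalities on $\cal A^{-1}$, forcing $\phi'=\phi$. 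The main obstacle is the parallelism/equality argument that converts the a priori inequalities $\|\rho(a)h\|^2\le\phi(a^*a)$ into the equalities needed for polarization; everything else is assembling the cited results.
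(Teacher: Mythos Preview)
Your argument is on track through the construction of the single dominating state $\phi$: the upgrade to row/column contractivity via Theorem~\ref{tworowimpliescc}, the passage to completely contractive maps into $\cal H_r$ and $\cal H_c$ (this is exactly Lemma~\ref{colimpliescc}, which you should cite rather than rederive), the application of Theorem~\ref{dominatingstate} to produce $s_r,s_c$, and the merge $s_r=s_c=\phi$ via Proposition~\ref{unique} all match the paper's proof. Uniqueness is also fine.

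The genuine gap is in your derivation of $\phi(a)=\langle\rho(a)h,h\rangle$. You claim that the Cauchy--Schwarz chain
\[
1=\langle\rho(a)h,\rho(a^{-1})^\ast h\rangle^2\le\|\rho(a)h\|^2\,\|\rho(a^{-1})^\ast h\|^2\le\phi(a^\ast a)\,\phi((a^\ast a)^{-1})
\]
``forces equality throughout'', in particular $\|\rho(a)h\|^2=\phi(a^\ast a)$ for $a\in\cal A^{-1}$. This is false. The inequality $1\le\phi(p)\phi(p^{-1})$ holds for \emph{every} state $\phi$ and every positive invertible $p$ (it is just Cauchy--Schwarz for $\phi$), so it imposes no extra constraint. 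Concretely, take $\cal A=A(\bb D)\subset C(\bb T)$, $\rho(f)=f(0)$ on $\cal H=\bb C$, $h=1$; the unique dominating state is $\phi(f)=\int_{\bb T}f\,dm$, and for $a(z)=2+z$ one has $\|\rho(a)h\|^2=4$ while $\phi(a^\ast a)=\int|2+z|^2\,dm=5$. The inequality is strict, so your polarization scheme never gets started.

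The paper's route to the interpolation identity is short and does not require equality anywhere: apply the dominating inequality to $a+\lambda\cdot 1$ for $\lambda\in\bb C$ and expand both sides to obtain
\[
2\Re\bigl(\overline{\lambda}\,(\langle\rho(a)h,h\rangle-\phi(a))\bigr)\le\phi(a^\ast a)-\|\rho(a)h\|^2.
\]
The right side is a fixed real number while the left side can be made arbitrarily large in $\lambda$ unless $\langle\rho(a)h,h\rangle=\phi(a)$. This one-line trick replaces your entire ``equality and polarization'' paragraph.
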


\begin{proof} It will be enough to consider the case where $\|h\|=1$,
  and prove that there exists a state $\phi$ as above, since uniqueness will follow by Proposition~\ref{unique}.
Since the map $a\mapsto \rho(a)$ is $C_2$-contractive, it follows by Theorem~\ref{tworowimpliescc} that $a\mapsto \rho(a)$ is column contractive. Hence, by Lemma~\ref{colimpliescc} the map $a\mapsto \rho(a)h$ is completely contractive as a map from $\cal{A}$ into the column Hilbert space $\cal{H}_c$. By Theorem~\ref{dominatingstate} there exists a state $\phi_1$ such that 
$\norm{\rho(a)h}^2\leq \phi_1(a^\ast a).$
Moreover, since $a \mapsto \rho(a)$ is $R_2$-contractive, the map $a^\ast \mapsto \rho(a)^\ast h$ is completely contractive as a map from $\cal{A}^\ast$ to the column Hilbert space $\cal{H}_c$. Applying Theorem~\ref{dominatingstate} again yields a state such that 
$\norm{\rho(a)^\ast h}^2\leq \phi_2(aa^\ast)$. From Proposition~\ref{unique} we get that $\phi_1=\phi_2$.

Let $\lambda$ be a complex scalar and consider $\norm{\rho(a+\lambda)h}^2\leq \phi((a+\lambda)^\ast (a+\lambda))$. Expanding both sides and rearranging we get
\[2\Re (\lambda(\inp{\rho(a)h}{h}-\phi(a)))\leq \phi(a^\ast a)-\norm{\rho(a)h}^2.\]
This quantity being positive for all $\lambda\in \bb{C}$ implies that $\inp{\rho(a)h}{h}=\phi(a)$. 
 \end{proof}

\begin{lemma}\label{semispectralstate} Let $\cal A \subseteq \cal B$ be logmodular and let
  $\rho: \cal A \to B(\cal H)$ be a representation that is $R_2$-contractive and
  $C_2$-contractive and for each $h \in \cal H$ let $\phi_h$
  denote the unique positive linear functional as obtained in Proposition~\ref{dominatingrep}, then
  for any vectors, $h,k \in \cal H$, $\phi_{h+k} + \phi_{h-k} = 2 \phi_h
  + 2 \phi_k.$
\end{lemma}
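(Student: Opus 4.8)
The plan is to imitate the proof of Proposition~\ref{unique}, applied to the two positive linear functionals
\[ F := \phi_{h+k} + \phi_{h-k}, \qquad G := 2\phi_h + 2\phi_k \]
on $\cal B$. Both are positive, and the parallelogram law in $\cal H$ gives $F(1) = \norm{h+k}^2 + \norm{h-k}^2 = 2\norm{h}^2 + 2\norm{k}^2 = G(1) =: c$. If $c = 0$ then $h = k = 0$ and there is nothing to prove, so assume $c > 0$; the goal is to show $F = G$. Note that neither side is obviously of the form $\phi_v$ for a single vector $v$, so we cannot appeal to the uniqueness in Proposition~\ref{dominatingrep} directly; instead we compare $F$ and $G$ by reproducing the argument of Proposition~\ref{unique}.

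First I would record, for each $a \in \cal A^{-1}$, the inequalities inherited from Proposition~\ref{dominatingrep}. Since $\norm{\rho(a)v}^2 \le \phi_v(a^*a)$ and $\norm{\rho(a)^*v}^2 \le \phi_v(aa^*)$ for $v = h, k, h\pm k$, the parallelogram law for $\norm{\cdot}$ yields
\[ F(a^*a) \ge \norm{\rho(a)(h+k)}^2 + \norm{\rho(a)(h-k)}^2 = 2\norm{\rho(a)h}^2 + 2\norm{\rho(a)k}^2, \]
and likewise $G(a^*a) \ge 2\norm{\rho(a)h}^2 + 2\norm{\rho(a)k}^2$, together with the analogous bounds for $F(aa^*)$ and $G(aa^*)$ in terms of $\rho(a)^*$.

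Next, for $a \in \cal A^{-1}$ I would use $\rho(a^{-1}) = \rho(a)^{-1}$ to write $\inp{v}{v} = \inp{\rho(a)v}{\rho(a^{-1})^*v}$ and apply Cauchy--Schwarz, once for $v = h$ and once for $v = k$, and then apply Cauchy--Schwarz a second time to the resulting two-term sum, to obtain
\[ c \le \bigl(2\norm{\rho(a)h}^2 + 2\norm{\rho(a)k}^2\bigr)^{1/2}\bigl(2\norm{\rho(a^{-1})^*h}^2 + 2\norm{\rho(a^{-1})^*k}^2\bigr)^{1/2}. \]
Since $a^{-1}(a^{-1})^* = (a^*a)^{-1}$, combining this with the inequalities above gives $c^2 \le F(a^*a)\,G\bigl((a^*a)^{-1}\bigr)$ for every $a \in \cal A^{-1}$. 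Because $\cal A$ is logmodular, $\{a^*a : a \in \cal A^{-1}\}$ is dense in $\cal B_+^{-1}$, and since $p \mapsto F(p)\,G(p^{-1})$ is norm-continuous on $\cal B_+^{-1}$, the inequality $c^2 \le F(p)\,G(p^{-1})$ extends to all positive invertible $p \in \cal B$. This density step, where logmodularity is used, is the only place that requires genuine care; in particular one cannot shortcut it by invoking $F = G$ on $\cal A + \cal A^*$ (which does hold, by polarization of $\phi_v(a) = \inp{\rho(a)v}{v}$), since $\cal A + \cal A^*$ need not be dense in $\cal B$.

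Finally I would run the exponential argument exactly as in Proposition~\ref{unique}: for self-adjoint $x \in \cal B$ put $u(t) = F(e^{tx})\,G(e^{-tx})$, a differentiable function with $u(t) \ge c^2 = u(0)$ for all real $t$, hence $u'(0) = 0$; computing, $u'(0) = c\,F(x) - c\,G(x)$, so $F(x) = G(x)$. As this holds for every self-adjoint $x$ and $F, G$ are self-adjoint, $F = G$ on $\cal B$, i.e. $\phi_{h+k} + \phi_{h-k} = 2\phi_h + 2\phi_k$. (Alternatively, one can reach the same conclusion by applying the uniqueness in Proposition~\ref{dominatingrep} to the representation $\rho \oplus \rho$ on $\cal H \oplus \cal H$: this identifies $\phi_u + \phi_k$-type sums with the functional attached to a single vector $(u,v)$, and the unitary $(x,y)\mapsto 2^{-1/2}(x+y,x-y)$ commutes with $(\rho\oplus\rho)(\cal A)$, which forces the functional attached to $(h+k,h-k)=\sqrt 2\,W(h,k)$ to equal twice that attached to $(h,k)$.)
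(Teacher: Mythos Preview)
Your main argument is correct and is essentially the paper's own proof: both obtain the key inequality $c^2 \le F(a^*a)\,G((a^*a)^{-1})$ via Cauchy--Schwarz together with the parallelogram law (you apply them in a slightly different order, and you make the logmodular density step explicit where the paper leaves it implicit), and both finish with the same exponential differentiation trick from Proposition~\ref{unique}. Your parenthetical alternative via $\rho\oplus\rho$ and the unitary $W$ is a genuinely different and cleaner route that the paper does not take: it replaces the analytic minimization argument by a direct appeal to the uniqueness clause of Proposition~\ref{dominatingrep} applied on $\cal H\oplus\cal H$, trading the calculus for a structural observation about commuting unitaries.
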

\begin{proof}  The proof is identical to the one outlined in~\cite[Lemma~3]{FS}. For any $a \in \left(\cal A\right)^{-1}$ we have
\begin{align*}
&2(\|h\|^2 + \|k\|^2) \\
=& \|h+k\|^2 + \|h-k\|^2 \\ 
=& \inp{\rho(a)(h+k)}{\rho(a^{-1})^\ast(h+k)}+\inp{\rho(a)(h-k)}{\rho(a^{-1})^\ast (h-k)} \\ 
\le & \left(\|\rho(a)(h+k)\|^2 + \|\rho(a)(h-k)\|^2\right)^{1/2} \\
&\times \left(\|\rho(a^{-1})^*(h+k)\|^2 + \|\rho(a^{-1})^*(h-k)\|^2\right)^{1/2} \\ 
\le & \left( \phi_{h+k}(a^*a) + \phi_{h-k}(a^*a)\right)^{1/2}\left(2\phi_h((a^*a)^{-1}) + 2\phi_k((a^*a)^{-1})\right)^{1/2}. 
\end{align*}
Thus, for any $x=x^* \in \cal B,$ and $t \in \bb R,$ we have that
$$4(\|h\|^2 + \|k\|^2)^2 \le [ \phi_{h+k}(e^{tx}) +
\phi_{h-k}(e^{tx})][2\phi_h(e^{-tx}) + 2\phi_k(e^{-tx})].$$
Since $t=0$ is the minimum of the function on the right hand side,
differentiating and evaluating at $t=0$, yields
\begin{multline*}
0=\left(\phi_{h+k}(x)+ \phi_{h-k}(x)\right)\left(2 \|h\|^2 + 2 \|k\|^2\right) \\- \left(\|h+k\|^2
+ \|h-k\|^2\right)\left(2\phi_h(x)+ 2\phi_k(x)\right),
\end{multline*}
from which the result follows.
\end{proof}

We can now state and prove our key result.

\begin{thm}\label{uniqueextension} Let $\cal A \subseteq \cal B$ be logmodular. If $\rho:
  \cal A \to B(\cal H)$ is a unital homomorphism which is $R_2$-contractive
  and $C_2$-contractive, then there exists a positive map,
  $\Phi: \cal B \to B(\cal H),$ extending $\rho$ and satisfying,
  $\Phi(a^*a) \ge \rho(a)^*\rho(a),$ $\Phi(aa^*) \ge
  \rho(a)\rho(a)^*$, for every $a \in \cal A$. Such a map is unique.
Moreover, if $\Psi: \cal B \to B(\cal H)$ is any completely positive
map extending $\rho,$ then $\Psi = \Phi.$
\end{thm}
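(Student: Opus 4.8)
The plan is to build $\Phi$ pointwise using the vector functionals supplied by Proposition~\ref{dominatingrep}. For each $h \in \cal H$ let $\phi_h : \cal B \to \bb C$ be the unique positive linear functional with $\phi_h(1) = \|h\|^2$, $\|\rho(a)h\|^2 \le \phi_h(a^*a)$, $\|\rho(a)^*h\|^2 \le \phi_h(aa^*)$, and $\phi_h(a) = \langle \rho(a)h, h\rangle$ for $a \in \cal A$. Define $\gamma : \cal H \to \cal B^{\dagger}_+$ by $\gamma(h) = \phi_h$. The homogeneity $\gamma(\lambda h) = |\lambda|^2 \gamma(h)$ follows from the uniqueness statement in Proposition~\ref{unique} together with the fact that $\|\rho(a)(\lambda h)\|^2 = |\lambda|^2\|\rho(a)h\|^2$, so $|\lambda|^2 \phi_h$ satisfies the defining inequalities for $\phi_{\lambda h}$; the bound $\|\gamma(h)\| = \phi_h(1) = \|h\|^2$ gives condition (2) with $C = 1$; and the parallelogram law $\gamma(h+k) + \gamma(h-k) = 2\gamma(h) + 2\gamma(k)$ is exactly Lemma~\ref{semispectralstate}. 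Hence all three hypotheses of Theorem~\ref{semispectral2} are met, and we obtain a positive linear map $\Phi : \cal B \to B(\cal H)$ with $\langle \Phi(b)h, h\rangle = \phi_h(b)$ for every $b \in \cal B$ and $h \in \cal H$; since each $\phi_h$ is a state when $\|h\| = 1$, the final clause of Theorem~\ref{semispectral2} gives $\Phi(1) = I$, i.e. $\Phi$ is unital.

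Next I verify that $\Phi$ has the advertised properties. For $a \in \cal A$ and every $h$ we have $\langle \Phi(a)h, h\rangle = \phi_h(a) = \langle \rho(a)h, h\rangle$, so by polarization $\Phi(a) = \rho(a)$; thus $\Phi$ extends $\rho$. For the domination inequalities, for every $h$ we have $\langle \Phi(a^*a)h, h\rangle = \phi_h(a^*a) \ge \|\rho(a)h\|^2 = \langle \rho(a)^*\rho(a)h, h\rangle$, whence $\Phi(a^*a) \ge \rho(a)^*\rho(a)$; symmetrically $\Phi(aa^*) \ge \rho(a)\rho(a)^*$. Note these initially hold for $a \in \cal A^{-1}$ and extend to all $a \in \cal A$ by continuity of $\Phi$ and density (or simply by noting the functional inequalities in Proposition~\ref{dominatingrep} already hold for all $a \in \cal A$).

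For uniqueness, suppose $\Phi'$ is another positive map extending $\rho$ with $\Phi'(a^*a) \ge \rho(a)^*\rho(a)$ and $\Phi'(aa^*) \ge \rho(a)\rho(a)^*$ for all $a \in \cal A$. Fix a unit vector $h$ and put $\phi'_h(b) = \langle \Phi'(b)h, h\rangle$; this is a state on $\cal B$ with $\phi'_h(a) = \langle\rho(a)h,h\rangle$, $\phi'_h(a^*a) \ge \|\rho(a)h\|^2$, and $\phi'_h(aa^*) \ge \|\rho(a)^*h\|^2$ for all $a \in \cal A^{-1}$. By the uniqueness assertion of Proposition~\ref{dominatingrep} (whose proof, via Proposition~\ref{unique}, only uses these three properties), $\phi'_h = \phi_h$. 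Since $h$ was an arbitrary unit vector and both sides are homogeneous of degree two, $\langle \Phi'(b)h, h\rangle = \langle\Phi(b)h,h\rangle$ for all $h$, hence $\Phi' = \Phi$ by polarization. Finally, if $\Psi : \cal B \to B(\cal H)$ is completely positive and extends $\rho$, then for $a \in \cal A$ the Schwarz inequality for completely positive (indeed $2$-positive) maps gives $\Psi(a^*a) \ge \Psi(a)^*\Psi(a) = \rho(a)^*\rho(a)$ and likewise $\Psi(aa^*) \ge \rho(a)\rho(a)^*$; thus $\Psi$ satisfies the hypotheses characterizing $\Phi$, so $\Psi = \Phi$ by the uniqueness just proved. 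The main obstacle in this argument is purely bookkeeping — confirming that the parallelogram identity and homogeneity transfer cleanly from the $\phi_h$ to $\gamma$ so that Theorem~\ref{semispectral2} applies verbatim — since all the analytic content has already been isolated in Proposition~\ref{dominatingrep} and Lemma~\ref{semispectralstate}.
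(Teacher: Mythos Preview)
Your proof is correct and follows essentially the same route as the paper's own argument: build $\Phi$ from the family $\{\phi_h\}$ of Proposition~\ref{dominatingrep}, use Lemma~\ref{semispectralstate} for the parallelogram identity, and deduce uniqueness and the completely positive case exactly as you do. The only cosmetic difference is that you invoke Theorem~\ref{semispectral2} directly, whereas the paper appeals to the underlying quadratic-form result (Proposition~\ref{quadratic}) for each fixed $b$; since Theorem~\ref{semispectral2} was proved precisely to package that step, your version is arguably the cleaner presentation.
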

\begin{proof}
Let $\phi_h: \cal B \to \bb C, h \in \cal H,$ be the family of
positive functionals guaranteed by Proposition~\ref{dominatingrep}.
If we fix $b \in \cal B,$ then by Lemma~\ref{semispectralstate} the map $h \mapsto \phi_h(b),$ is easily
seen to be a bounded quadratic form on $\cal H$ and hence there exists
a unique bounded operator, $\Phi(b)$ with $\inp{\Phi(b)h}{h}= \phi_h(b).$
The map $b \mapsto \Phi(b)$ is easily checked to be linear, positive and
to satisfy the other conclusions stated in the theorem.

If $\Psi: \cal B \to B(\cal H)$ is any positive map
satisfying the above conclusions, then for each $h \in \cal H,$
the positive linear maps, $\psi_h(b) = \inp{\Psi(b)h}{h}$
satisfy the conclusions of Proposition~\ref{dominatingrep}, and hence
$\psi_h(b)=\phi_h(b),$ for every $b$ and every $h$.

Finally, if $\Psi$ is a completely positive map extending $\rho$, then
by the Cauchy-Schwarz inequality for 2-positive maps, it will
satisfy $\Psi(a^*a) \ge \rho(a)^*\rho(a)$ and $\Psi(aa^*) \ge
\rho(a)\rho(a)^*,$ and so we will have $\Psi =\Phi,$ by the uniqueness
result. 
\end{proof}

Thus, a completely positive extension of $\rho$ exists if and only if
the positive extension that we have constructed is completely
positive. We now re-capture the main result of Foias-Suciu~\cite{FS}.

\begin{cor} Let $X$ be a compact, Hausdorff space, let $\cal A
  \subseteq C(X)$ be logmodular, let $\rho: \cal A \to B(\cal H)$ be a
  unital homomorphism that is  $R_2$-contractive and $C_2$-contractive. Then there exists a Hilbert space $\cal K,$ an isometry
  $V: \cal H \to \cal K,$ and a unital $\ast$-homomorphism $\pi:C(X) \to
  B(\cal K)$ such that $\rho(a) = V^*\pi(a)V,$ for every $a \in \cal
  A.$
If the span of $\pi(C(X))V\cal H$ is dense in $\cal K,$ then this
  representation is unique up to unitary equivalence.
\end{cor}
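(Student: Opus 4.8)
The plan is to apply Theorem~\ref{uniqueextension} with $\cal B = C(X)$ and then dilate. First I would invoke Theorem~\ref{uniqueextension} to obtain the unique positive map $\Phi: C(X) \to B(\cal H)$ extending $\rho$. Since $C(X)$ is commutative, a classical theorem of Stinespring guarantees that every positive map from $C(X)$ into $B(\cal H)$ is automatically completely positive, so $\Phi$ is completely positive. Applying Stinespring's dilation theorem~\cite{St} to $\Phi$ then yields a Hilbert space $\cal K$, a unital $*$-homomorphism $\pi: C(X) \to B(\cal K)$ and a bounded operator $V: \cal H \to \cal K$ with $\Phi(f) = V^*\pi(f)V$ for all $f \in C(X)$; since $\Phi(1) = \rho(1) = I_{\cal H}$, we get $V^*V = I_{\cal H}$, so $V$ is an isometry. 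Restricting $\Phi(f) = V^*\pi(f)V$ to $f = a \in \cal A$ gives $\rho(a) = V^*\pi(a)V$, which is the asserted dilation. Passing, if desired, to the closed span of $\pi(C(X))V\cal H$ shows that a dilation satisfying the density condition always exists.

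For the uniqueness clause, suppose $(\cal K_1, V_1, \pi_1)$ and $(\cal K_2, V_2, \pi_2)$ are two such triples, each satisfying $\rho(a) = V_i^*\pi_i(a)V_i$ for $a \in \cal A$ and each with $\pi_i(C(X))V_i\cal H$ having dense span in $\cal K_i$. I would define $\Phi_i: C(X) \to B(\cal H)$ by $\Phi_i(f) = V_i^*\pi_i(f)V_i$. Each $\Phi_i$ is unital and completely positive, being of Stinespring form, and each restricts to $\rho$ on $\cal A$; moreover, by the Cauchy--Schwarz inequality for completely positive maps, $\Phi_i(a^*a) \ge \rho(a)^*\rho(a)$ and $\Phi_i(aa^*) \ge \rho(a)\rho(a)^*$ for $a \in \cal A$. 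Hence both $\Phi_1$ and $\Phi_2$ satisfy the hypotheses characterizing the unique map in Theorem~\ref{uniqueextension}, so $\Phi_1 = \Phi_2$. Thus the two triples are minimal Stinespring dilations of one and the same completely positive map (minimality being exactly the density condition in the statement), and the standard uniqueness part of Stinespring's theorem produces a unitary $U: \cal K_1 \to \cal K_2$ with $UV_1 = V_2$ and $U\pi_1(f)U^* = \pi_2(f)$ for every $f \in C(X)$, giving the claimed unitary equivalence.

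There is no serious obstacle here once Theorem~\ref{uniqueextension} is available: the argument is an assembly of three standard facts --- automatic complete positivity of positive maps on a commutative $C^*$-algebra, Stinespring's dilation theorem, and the uniqueness of the minimal Stinespring dilation. The only points requiring a small amount of care are checking that the two maps $\Phi_i$ of Stinespring form really satisfy the two operator inequalities needed to invoke Theorem~\ref{uniqueextension} (immediate from $2$-positivity) and observing that the density hypothesis in the corollary is precisely the minimality condition that makes Stinespring's uniqueness statement applicable.
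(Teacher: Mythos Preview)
Your proposal is correct and follows essentially the same route as the paper: invoke Theorem~\ref{uniqueextension} to get the positive extension $\Phi$, use automatic complete positivity on a commutative $C^\ast$-algebra, then apply Stinespring's dilation and its uniqueness for minimal dilations. Your write-up is in fact a bit more explicit than the paper's in checking that $V$ is an isometry and in spelling out why two minimal dilations yield the same $\Phi$ via the last clause of Theorem~\ref{uniqueextension}.
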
 
\begin{proof} By Theorem~\ref{uniqueextension}, there
  exists a positive map, $\Phi: C(X) \to B(\cal H).$ By Stinespring's
  theorem~\cite{St} (see also~\cite{Pa}) such a map is automatically
  completely positive and hence by Stinespring's dilation
  theorem~\cite{St} (see also~\cite{Pa}) the map $\Phi$ has
  a unique dilation of the above type.

Conversely, given any dilation of the above type, if we set $\Phi(b) =
V^*\pi(b)V,$ then $\Phi$ is a positive map satisfying the above
conditions and hence is unique.
\end{proof}

\section{Matrix Factorization in Logmodular Algebras}

In~\cite[Corollary~18.11]{Pa}, a necessary and sufficient condition is
given for an operator algebra to have the property that every
contractive representation is completely contractive in terms of a
certain type of factorization. In particular, it is proven that a
unital operator algebra $\cal A$ has the property that every
contractive representation is completely contractive if and only if
for every $n$ and for every $(a_{i,j}) \in M_n(\cal A)$ with
$\|(a_{i,j})\| < 1,$ there exists some $m,$ scalar matrices, $C_0,
\ldots, C_m$ (of appropriate sizes) all with norm less than one and
diagonal matrices, $D_1, \ldots, D_m$ (of appropriate sizes), whose
diagonal entries are elements of the open unit ball of $\cal A,$ such
that
\[(a_{i,j}) = C_0D_1C_1 \cdots D_mC_m.\] 
The term {\em appropriate sizes} means simply that the sizes are such
that the product is defined. Combining this result with the result of
Foias-Suciu, yields some new results about uniform logmodular
algebras.

\begin{thm} Let $\cal A \subseteq C(X)$ be a uniform logmodular
  subalgebra.  Then every contractive representation of $\cal A$ is
  completely contractive if and only if for each $f_1, f_2 \in \cal
  A,$ with $|f_1|^2+|f_2|^2 < 1,$ there exists an $m$ and scalar
  matrices, $C_0, \ldots, C_m$ (of appropriate sizes) all with norm
  less than one and diagonal matrices $D_1, \ldots, D_m$ (of
  appropriate sizes) whose diagonal entries are elements of the open
  unit ball of $\cal A,$ such that,
\[\begin{bmatrix} f_1 \\ f_2 \end{bmatrix} = C_0D_1 C_1 \cdots
D_mC_m.\]
\end{thm}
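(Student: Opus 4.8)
The plan is to derive both implications from two results already in hand: the factorization criterion \cite[Corollary~18.11]{Pa}, which characterizes complete contractivity of \emph{all} contractive representations of an operator algebra in terms of factoring arbitrary square matrices over it, and the theorem of Foias and Suciu \cite{FS}, which for a uniform logmodular algebra reduces complete contractivity of a representation to its being $R_2$- and $C_2$-contractive. Together these sandwich the general square-matrix factorization down to that of a single $2\times 1$ column, which is exactly the content of the statement.

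For the forward implication, assume every contractive representation of $\cal A$ is completely contractive. By \cite[Corollary~18.11]{Pa} every $(a_{i,j})\in M_n(\cal A)$ with $\norm{(a_{i,j})}<1$ admits a factorization $C_0D_1C_1\cdots D_mC_m$ of the stated form. Now given $f_1,f_2\in\cal A$ with $\mod{f_1}^2+\mod{f_2}^2<1$, compactness of $X$ gives $\norm{\mod{f_1}^2+\mod{f_2}^2}_\infty<1$; since adjoining a zero column does not change the norm, the matrix $\begin{bmatrix}f_1 & 0\\f_2 & 0\end{bmatrix}\in M_2(\cal A)$ has norm $(\norm{\mod{f_1}^2+\mod{f_2}^2}_\infty)^{1/2}<1$. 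Factor it, then right-multiply by the scalar column $\begin{bmatrix}1\\0\end{bmatrix}$ and absorb this contraction into the final factor $C_m$; what survives is a factorization of $\begin{bmatrix}f_1\\f_2\end{bmatrix}$ of the required type.

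For the converse, assume the $2$-column factorization property and let $\rho:\cal A\to B(\cal H)$ be an arbitrary contractive representation. I first check that $\rho$ is $C_2$-contractive. Fix $a_1,a_2\in\cal A$; by homogeneity it suffices to bound $\norm{\begin{bmatrix}\rho(a_1)\\\rho(a_2)\end{bmatrix}}$ when $\norm{\begin{bmatrix}a_1\\a_2\end{bmatrix}}\le 1$, and for each $\varepsilon>0$ the scaled column $(1+\varepsilon)^{-1}\begin{bmatrix}a_1\\a_2\end{bmatrix}$ meets the strict hypothesis, hence factors as $C_0D_1C_1\cdots D_mC_m$ with the $C_j$ scalar of norm $<1$ and each $D_j$ diagonal with entries in the open unit ball of $\cal A$. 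Applying $\rho$ entrywise to this matrix identity --- a purely algebraic operation, valid for any unital homomorphism and requiring no boundedness of an amplification of $\rho$ --- fixes the scalar matrices and replaces each $D_j$ by a block-diagonal operator matrix whose entries $\rho(d)$ satisfy $\norm{\rho(d)}\le\norm{d}<1$; submultiplicativity of the operator norm then gives $\norm{(1+\varepsilon)^{-1}\begin{bmatrix}\rho(a_1)\\\rho(a_2)\end{bmatrix}}<1$, and $\varepsilon\to 0$ yields $\norm{\begin{bmatrix}\rho(a_1)\\\rho(a_2)\end{bmatrix}}\le 1$. To get $R_2$-contractivity, transpose: on matrices over the commutative algebra $C(X)$ the transpose is an isometry that reverses products and carries diagonal matrices to themselves, so transposing a factorization of $\begin{bmatrix}f_1\\f_2\end{bmatrix}$ (with $\mod{f_1}^2+\mod{f_2}^2<1$) produces a factorization of the row $(f_1,f_2)$ of the same type, and the preceding estimate applies verbatim to rows. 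Thus $\rho$ is $R_2$- and $C_2$-contractive, whence $\rho$ is completely contractive by the theorem of Foias and Suciu \cite{FS} (recaptured in the corollary above).

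There is no substantive obstacle here: the proof is bookkeeping around the two quoted theorems. The points that need a little care are the identification of the pointwise inequality $\mod{f_1}^2+\mod{f_2}^2<1$ on compact $X$ with the corresponding matrix-norm bound, the fact that substituting $\rho$ into a factorization is legitimate even though $\rho$ is not known to be completely contractive, and the transpose device that lets one pass from $2$-columns to $2$-rows so that both the $R_2$ and the $C_2$ conditions become available.
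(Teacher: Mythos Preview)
Your proof is correct and follows essentially the same route as the paper's: one direction invokes \cite[Corollary~18.11]{Pa} applied to $\begin{bmatrix} f_1 & 0\\ f_2 & 0\end{bmatrix}$, and the other applies $\rho$ entrywise to a factorization (using only that $\rho$ is contractive on diagonals), then transposes to pass from $C_2$- to $R_2$-contractivity before invoking Foias--Suciu. Your write-up is somewhat more explicit about the $\varepsilon$-scaling and the legitimacy of the entrywise substitution, but the argument is the same.
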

\begin{proof} If $\rho$ is contractive and $D$ is a diagonal matrix in
  $M_n(\cal A)$, then $\norm{\rho_n(D)}\leq \norm{D}$. If the
  factorization condition is met, then we see, by applying $\rho$ to
  both sides of the above equation, that $\rho$ is
  $C_2$-contractive. 

  Note that whenever $\begin{bmatrix} f_1 \\ f_2 \end{bmatrix}$ has a
  factorization as above, then
  \[ \begin{bmatrix} f_1 & f_2 \end{bmatrix} = C_m^tD_m \cdots
  C_1^tD_1C_0^t \]%
  where $C^t$ denotes the transpose.  Since $\|C\| = \|C^t\|$ for
  scalar matrices, it follows from the same argument as in the
  previous paragraph that $\rho$ is also $R_2$-contractive. Hence, by
  the result of Foias-Suciu, the representation $\rho$ is completely
  contractive.

  Conversely, if every contractive representation is completely
  contractive, then by the result cited above, every element of the
  unit ball of $M_n(\cal A)$ has such a factorization. Thus, in
  particular, every element of the unit ball of $M_2(\cal A)$ of the
  form $\begin{bmatrix} f_1 & 0\\ f_2 & 0 \end{bmatrix}$ can be
  factorized, from which the factorization in the statement of the
  theorem follows.
\end{proof}

Of course, the columns of size two in the above theorem can be equally
well replaced by rows of size two.


\begin{thm} Let $\cal A \subseteq C(X)$ be a uniform logmodular subalgebra. 
Then for every $n$ and every $(f_{i,j})$ in the open unit ball of
$M_n(\cal A),$ there exists an $m,$
 scalar matrices, $C_0,
\ldots, C_m$(of appropriate sizes) all with norm less than one and
block diagonal matrices, $D_1, \ldots, D_m$(of appropriate sizes),  whose
direct summands are elements of the open unit ball of either
$M_{2,1}(\cal A)$ or $M_{1,2}(\cal A),$ such
that
\[(f_{i,j}) = C_0D_1C_1 \cdots D_mC_m.\]
\end{thm}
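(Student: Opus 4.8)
The plan is to reduce the general $n\times n$ factorization to the $2\times 1$ and $1\times 2$ cases handled in the previous theorem, by building up an arbitrary matrix in $M_n(\cal A)$ from columns and rows using only scalar matrices and block-diagonal matrices of the allowed type. First I would observe that it suffices to factor a single column $(f_{1},\ldots,f_{n})^t$ in the open unit ball of $M_{n,1}(\cal A)$: indeed, if $(f_{i,j})$ lies in the open unit ball of $M_n(\cal A)$, then each of its $n$ columns lies in the open unit ball of $M_{n,1}(\cal A)$, and one can assemble $(f_{i,j})$ by placing the $j$-th column into the $j$-th slot via a scalar matrix (the appropriate partial-isometry-like embedding and projection), at the cost of a controlled scaling so that all scalar factors keep norm $<1$. (The usual trick: write $(f_{i,j}) = \tfrac{1}{\sqrt n}\sum_j (\text{column}_j)\otimes e_j^t$ up to scalars, which is a sum that can in turn be written as a single product with block-diagonal and scalar matrices since a direct sum $D_1\oplus\cdots\oplus D_n$ of columns is itself a legal block-diagonal $D$, and summing is a scalar-matrix operation.)

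Next I would factor a column of length $n$ into columns of length $2$. Given $\begin{bmatrix} f_1\\ \vdots\\ f_n\end{bmatrix}$ in the open unit ball of $M_{n,1}(\cal A)$, I would peel off entries two at a time: write it (up to a fixed scalar rescaling to preserve strict inequalities) as a product of a scalar matrix with a block-diagonal matrix whose summands are $\begin{bmatrix} f_1\\ f_2\end{bmatrix}$, $\begin{bmatrix} f_3\\ f_4\end{bmatrix}$, etc. — i.e.\ reshape the column of height $n$ into a column of height $\lceil n/2\rceil$ whose entries are columns of height $2$ — and then recurse. After $O(\log n)$ stages the entries are genuine scalars times elements of $\cal A$, i.e.\ diagonal matrices with entries in $\cal A$, which are a degenerate instance of the allowed block-diagonal matrices (or one applies the previous theorem to each $2\times 1$ block to reach scalar $C_i$'s). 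Concatenating all these factorizations — interleaving the scalar matrices, absorbing products of scalar matrices into single scalar matrices via the submultiplicativity of the norm, and combining side-by-side block-diagonal matrices into one — produces the desired single product $C_0 D_1 C_1\cdots D_m C_m$; the $M_{1,2}(\cal A)$ summands enter by transposing, exactly as in the previous theorem, whenever it is more convenient to split a row.

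The routine bookkeeping is: (i) checking that at each reshaping step the scalar ``reshuffling'' matrix genuinely has norm $\le 1$ (these are coordinate permutations composed with inclusions/projections of $\bb C^n$'s, hence contractions), and (ii) inserting an $\varepsilon$-fudge so that the \emph{strict} inequality $\|(f_{i,j})\|<1$ propagates and all the $C_i$ come out with norm $<1$ rather than $\le 1$ — this is standard (replace $(f_{i,j})$ by $(1-\delta)^{-1}(f_{i,j})$ at the start and distribute the slack). I expect the main obstacle to be purely organizational rather than mathematical: making precise the claim that an arbitrary element of the open unit ball of $M_n(\cal A)$ is, up to scalar matrices of norm $<1$ and a single block-diagonal matrix, the same data as its list of columns — in other words, verifying that ``taking columns'' and ``forming direct sums and scalar combinations'' really are implemented by scalar matrices of norm $<1$ with the product defined — and then chaining the $\log n$ recursion levels without the scalar factors' norms creeping up to $1$. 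Once that reduction is set up cleanly, the theorem follows immediately from the preceding theorem applied to each $2\times 1$ (or $1\times 2$) block.
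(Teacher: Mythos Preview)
Your combinatorial reduction has a genuine gap at the very first step. When you ``assemble $(f_{i,j})$ by placing the $j$-th column into the $j$-th slot,'' the scalar matrix needed is essentially $[I_n\,|\,I_n\,|\cdots|\,I_n]$ acting on the block-diagonal of the columns, and this row of identities has norm $\sqrt n$, not $<1$. The same obstruction recurs when you try to write a height-$n$ column in terms of its height-$2$ sub-columns: writing $\text{diag}\big(\begin{smallmatrix}f_1\\f_2\end{smallmatrix},\begin{smallmatrix}f_3\\f_4\end{smallmatrix},\ldots\big)\cdot v=(f_1,\ldots,f_n)^t$ forces $v=(1,\ldots,1)^t$, again of norm $\sqrt{n/2}$. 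No rescaling fixes this, since shrinking the scalar factor forces the $\cal A$-blocks out of the open unit ball. Your ``$\varepsilon$-fudge'' and ``reshuffling by coordinate permutations'' do not address this; permutations are isometries, but the maps you actually need (summing several slots into one) are not contractions.

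More tellingly, your argument never uses logmodularity except in the closing appeal to ``the preceding theorem,'' and that appeal is misplaced: the preceding theorem is an \emph{equivalence} (``every contractive representation is completely contractive $\Leftrightarrow$ every $2\times1$ column factors through diagonals''), not an unconditional factorization of $2\times1$ columns. If your purely combinatorial reduction worked, it would prove the present theorem for \emph{every} unital operator algebra, which would say that $R_2$- and $C_2$-contractive always implies completely contractive --- exactly the open question the paper is built around.

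The paper's proof is not constructive at all. It endows $\cal A$ with a new matrix-norm structure $\cal C$ whose open unit balls are \emph{defined} to be the matrices admitting such a factorization, checks that $\cal C$ satisfies the Blecher--Ruan--Sinclair axioms, represents $\cal C$ completely isometrically on a Hilbert space, observes that this representation is $R_2$- and $C_2$-contractive on $\cal A$ (since $2\times1$ and $1\times2$ blocks are among the allowed $D_i$'s), and then invokes Foias--Suciu to conclude that the representation --- hence the identity $\cal A\to\cal C$ --- is completely contractive. The factorization then falls out because the $M_n(\cal C)$-norm equals the $M_n(\cal A)$-norm. Logmodularity enters through Foias--Suciu, which is where the real content lies.
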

\begin{proof}  Let $\cal C= \cal A$ as algebras but define a new family of
  matrix norms on $\cal C$ by the above factorization.  That is, we
  declare the open unit ball of $M_n(\cal C)$ to be the set of all matrices
  that can be expressed as products of scalar contractions and
  block diagonal matrices as above of norm less than one. Arguing as in~\cite{BP} or in~\cite[Theorem~18.1]{Pa} one shows that this definition yields a
  family of norms on $M_n(\cal C)$ and that these norms satisfy the
  Blecher-Ruan-Sinclair~\cite{BRS} axioms to form an abstract operator algebra. 

The proof will be complete if we can show that the identity map from $\cal A$ to $\cal C$ is a complete isometry.

Clearly, any matrix that is in the unit ball of $M_n(\cal C)$ is also
in the unit ball of $M_n(\cal A).$ This shows that the identity map
from $\cal C$ to $\cal A$ is completely contractive. 

Conversely, since we are allowing contractions from $M_{2,1}(\cal A)$
in our definition of the norm, we have that $\norm{\begin{bmatrix}
    f_1\\f_2 \end{bmatrix}}_{M_{2,1}(\cal C)} \le
\norm{\begin{bmatrix} f_1 \\ f_2 \end{bmatrix}}_{M_{2,1}(\cal A)}.$
Thus, the identity map is also an isometry from $M_{2,1}(\cal A)$ to
$M_{2,1}(\cal C),$ that is, the identity map from $\cal A$ to $\cal C$
is $C_2$-contractive. Similarly, this identity map is $R_2$-contractive.

Since $\cal C$ is an abstract operator algebra, by the
representation theorem of~\cite{BRS} (see~\cite[Corollary~16.7]{Pa} for
an alternate proof), it can be represented completely isometrically on a Hilbert space via some map,
$\rho: \cal C \to B(\cal H).$ Regarding, $\rho: \cal A \to B(\cal H),$
we see that $\rho$ is $C_2$-contractive and $R_2$-contractive on the
uniform logmodular algebra $\cal A,$ so by the Foias-Suciu result, it is a completely contractive map.

Thus, the identity map is completely contractive as a map from $\cal
A$ to $\cal C.$ As we saw earlier, the identity map is completely
contractive from $\cal C$ to $\cal A,$ and hence the identity map is a
complete isometry. Thus, the original family of matrix norms on $\cal A$ is equal to the new family. 
\end{proof}


The above result can also be used as the basis of an approach to
deciding whether or not the analogue of the Foias-Suciu result is true
for logmodular subalgebras of non-commutative $C^\ast$-algebras. Given a
logmodular subalgebra $\cal A \subseteq \cal B,$ one can form a new
operator algebra $\cal C$ by endowing $\cal A$ with a (possibly) new
operator algebra norm as above, allowing the diagonals to be direct
sums of block diagonal matrices where each block is either a $2 \times
1$ matrix or a $1 \times 2$ matrix over $\cal A.$ The proof of the
above result shows that every $\rho: \cal A \to B(\cal H)$ that is
both $R_2$- and $C_2$-contractive will be completely
contractive if and only if the identity map from $\cal A$ to $\cal C$
is a complete isometry.  Thus, some progress could conceivably be made
by studying the abstract operator algebra $\cal C.$ These
considerations motivate the following problem.

\begin{ques} Let $\cal A \subseteq \cal B$ be a logmodular subalgebra
  of a $C^\ast$-algebra and let $\cal C$ be the abstract operator algebra
  defined in the above paragraph. Is $\cal C$ a logmodular algebra?
\end{ques}

\end{document}